\definecolor{webgreen}{rgb}{0,.5,0}
\definecolor{webbrown}{rgb}{.8,0,0}
\definecolor{emphcolor}{rgb}{0.5,0.95,0.95}
\newtheorem{theorem}{Theorem}[section]
\newtheorem{proposition}{Proposition}[section]
\newtheorem{lemma}{Lemma}[section]
\newtheorem{remark}{Remark}[section]
\newtheorem{definition}{Definition}[section]
\newtheorem{assumption}{Assumption}[section]
\newtheorem{corollary}{Corollary}[section]
\newcommand{\p}{\mathbb{P}}
\newcommand{\e}{\mathbb{E}}
\newcommand{\reals}{\mathbb{R}}
\newcommand{\ind}{\mathbf{1}}
\newcommand{\Ind}{\mathbb{I}}
\newcommand{\diff}{\mathrm{d}}
\newcommand{\cH}{\mathcal{H}}
\newcommand{\homega}{\omega_q}
\newcommand{\ruintime}{\sigma^\pi_\omega}
\newcommand{\BRA}[1]{{{\left\{#1\right\}}}} 
\newcommand{\PAR}[1]{{{\left(#1\right)}}} 
\newcommand{\SBRA}[1]{{{\left[#1\right]}}} 
\renewcommand{\leq}{\leqslant}
\renewcommand{\geq}{\geqslant}
\begin{document}

\title[Impulse control with a level-dependent intensity of ruin]{Impulse control in a spectrally negative L\'evy model with a level-dependent intensity of bankruptcy}

\author[D. Mata]{Dante Mata}

\address{D\'epartement de math\'ematiques, Universit\'e du Qu\'ebec \`a Montr\'eal (UQAM), 201 av.\ Pr\'esident-Kennedy, Montr\'eal (Qu\'ebec) H2X 3Y7, Canada}
\email{mata\_lopez.dante@uqam.ca}


\date{\today}

\keywords{Stochastic control, optimal dividends, spectrally negative L\'{e}vy processes, Omega models, Parisian ruin, barrier strategies.}

\begin{abstract}
We consider an optimal dividend problem with transaction costs where the surplus is modelled by a spectrally negative L\'evy process in an Omega model. n this model, the surplus is allowed to spend time below the critical ruin level, but is penalised by a state-dependent intensity of bankruptcy. We show that under the spectrally negative model an optimal strategy is such that the surplus is reduced to a level $c_1$ whenever they are above another level $c_2$, and that such levels are unique under the additional assumption that the L\'evy measure has a log-convex tail. We describe a numerical method to compute the optimal values $c_1$ and $c_2$.
\end{abstract}

\maketitle


\section{Introduction}
In this paper, we study a version of De Finetti's optimal dividend problem in continuous time in which a transaction cost is incurred each time a dividend payment is made. The presence of this fixed transaction cost prevents the controller from paying dividends continuously in time, and therefore only lump sum dividend payments are feasible. This class of problems is widely known in the literature as the impulse comtrol problem.\\
We assume that the underlying dynamics of the risk process is described by a spectrally negative L\'evy process (SNLP), i.e., a process that exhibits only downwards jumps. This class of processes has been widely used in the insurance literature, as they are a generalisation of the classical Cram\'er-Lundberg process. Early instances of the impulse control problem can be found in the seminal works of Jeanblanc-Picqu\'e and Shiryaev \cite{JS1995} and Paulsen \cite{Paulsen2007} under the diffusive model; and afterwards  Loeffen \cite{loeffen2009} studied the impulse control problem under the SNLP model. Recently, Moreno-Franco and P\'erez \cite{MP2024} have studied a version of the dividend problem with transaction costs driven by an SNLP with Poissonian decision times. In these preliminary studies, it was shown that a double barrier strategy is optimal. In broad terms, a double barrier strategy requires two levels $(c_1,c_2)$, with $c_1 < c_2$ such that whenever the surplus is above the level $c_2$, then a lump sum is performed to bring the reserves down to level $c_1$. It must be remarked that each lump sum payment must cover at least the transaction cost.\\
These previous studies, namely \cite{JS1995,loeffen2009,Paulsen2007} rely on the concept of \textit{classical} ruin, which is defined as the first passage time of the controlled surplus below  the critical level 0. Recently, this assumption has been generalised to consider more realistic models of ruin. For instance, the concept of Parisian ruin allows the surplus to remain below the critical level, however ruin is declared if the process remains negative after a certain amount of time, we refer the reader to \cite{renaud2019} for a dividend problem under the constraint of Parisian ruin. An impulse control problem under the constraint of Parisian ruin has been studied by Czarna and Kaszubowski \cite{CK2020} under the \textit{refracted} spectrally negative model, and more recently Wang et al. \cite{Wang2024} have studied this problem under the Chapter 11 bankruptcy constraint. In this paper, we consider a more general version of Parisian ruin, the so-called \textit{Omega}-model, where a bankruptcy rate function $\omega$ is provided and if the surplus is at level $x < 0$, then the rate of bankruptcy is given by $\omega(x)$. This notion of bankruptcy was defined rigorously for SNLP's by Li and Palmowski \cite{li-palmowski_2018}, where they developed a suite of fluctuation identities expressed in terms of the \textit{generalised} scale functions, which are defined as the solution of an integral equation. Recently, Mata and Renaud \cite{mata-renaud_2024} studied the classical De Finetti dividend problem under this model of ruin, and derived analytical properties of the generalised scale functions under the assumption that the L\'evy measure has a log-convex tail. In the present paper we use both the fluctuation identities found in \cite{li-zhou_2014} and the analytical properties of the generalised scale function from \cite{mata-renaud_2024}, to prove the optimality of a double barrier strategy and, moreover, prove the uniqueness of an optimal strategy when the tail of the L\'evy measure is log-convex.\\
This paper is structured as follows, in Section \ref{Sec:Preliminary} we introduce the underlying surplus process, the so-called scale functions and the class of generalised scale functions, as well as the formulation of the impulse control problem. In Section \ref{Sec:Barrier} we introduce the double barrier strategies, compute their performance function in term of the generalised scale functions, and propose candidate values for the optimal strategy; whereas in Section \ref{Sec:Verification} we state a verification lemma and provide rigorous proofs to show that a double barrier strategy is optimal. In Section \ref{Sec:Optim:Convex} we study the uniqueness of an optimal strategy under the assumption that the tail of the L\'evy measure is log-convex, and we propose a method to compute the optimal barriers. Finally, in Section \ref{Sec:Numerical} we present numerical computations to illustrate our theoretical results. 


\section{Preliminaries}\label{Sec:Preliminary}
\subsection{L\'evy Processes}
Consider a filtered probability space $\left( \Omega, \mathcal{F}, \left\lbrace \mathcal{F}_t, t \geq 0 \right\rbrace, \p \right)$ such that $\mathcal{F}_\infty := \bigvee_{t \geq 0} \mathcal{F}_t \subsetneq \mathcal{F}$, where $\bigvee_{t \geq 0} \mathcal{F}_t$ is the smallest $\sigma$-algebra containing $\mathcal{F}_t$ for all $t \geq 0$. Let $X=\left\lbrace X_t , t \geq 0 \right\rbrace$ be a spectrally negative Lévy process (adapted to the filtration $\left\lbrace \mathcal{F}_t, t \geq 0 \right\rbrace$) with Laplace exponent $\theta \mapsto \psi (\theta)$ given by the L\'evy-Khintchine formula
\begin{equation}\label{eq.Laplace}
\psi(\theta) = \gamma \theta + \frac{1}{2} \sigma^2 \theta^2 + \int^{\infty}_0 \left( \mathrm{e}^{-\theta z} - 1 + \theta z \ind_{(0,1]}(z) \right) \nu(\mathrm{d}z) ,
\end{equation}
where $\gamma \in \reals$ and $\sigma \geq 0$, and where $\nu$ is a $\sigma$-finite measure on $(0,\infty)$, called the L\'{e}vy measure of $X$, satisfying
\begin{equation*}
\int^{\infty}_0 (1 \wedge x^2) \nu(\mathrm{d}x) < \infty .
\end{equation*}
For more details on spectrally negative Lévy processes, see e.g.
\cite{kyprianou_2014}.\\
In the rest of this paper, we make the following standing assumption:
\begin{assumption}\label{standing-assumption}
	\begin{enumerate}
	\item If the paths of $X$ are of bounded variation (BV), then the Lévy measure is assumed to be absolutely continuous with respect to Lebesgue measure.
	\item The tail of the L\'evy measure, i.e., the mapping $x \mapsto \nu(x,\infty)$ is assumed to be log-convex.
	\end{enumerate}
\end{assumption}

In what follows, we will use the following notation: the law of $X$ when starting from $X_0 = x$ is denoted by $\p_x$ and the corresponding expectation by $\e_x$. We write $\p$ and $\e$ when $x=0$.

\subsection{Optimisation problem}\label{sect:optimisation-problem}

Let $X$ be the underlying surplus process. An impulse control strategy $\pi$ is represented by a non-decreasing, right-continuous and adapted stochastic process $L^\pi = \left\lbrace L^\pi_t , t \geq 0 \right\rbrace$ such that $L^\pi_{0-} = 0$, where $L^\pi_t$ represents the cumulative amount of dividends paid up to time $t$ under this control strategy. In addition, we assume that $L^\pi$ is a pure-jump process, i.e.
\[
L^\pi_t = \sum_{0 \leq s < t} \Delta L^\pi_s, \quad t \geq 0.
\]
Throughout this paper, we mean $\Delta L^\pi_s = L^\pi_{s+} - L^\pi_s$.\\
For a given strategy $\pi$, the corresponding controlled process $U^\pi = \left\lbrace U^\pi_t , t \geq 0 \right\rbrace$ is defined by $U^\pi_t = X_t - L^\pi_t$. A strategy is said to be admissible if $U^\pi_t - \Delta L^\pi_t \geq 0$ for all $t$. Let $\Pi$ be the set of admissible strategies.\\
In order to define the bankruptcy time of interest, let us fix a \textit{bankruptcy rate function} $\omega \colon \reals \to [0,\infty)$ (see Definition~\ref{def:bankruptcy-rate-function} below), that is roughly speaking a non-negative and non-increasing function having support in $(-\infty,0]$. We define the following termination time associated with $\omega$: for $\pi \in \Pi$, set
\begin{equation}\label{omega-ruin}
\ruintime = \inf \left\lbrace t>0 \colon \int_0^t \omega(U^\pi_s) \mathrm{d}s > \mathbf{e}_1 \right\rbrace ,
\end{equation}
where $\mathbf{e}_1$ is an exponential random variable with unit mean, independent of the surplus.

Finally, fix a time-preference rate $q > 0$ and a cost per transaction $\beta > 0$. The performance function associated to an admissible strategy $\pi \in \Pi$ is defined by
\[
v_\pi (x) = \e_x \left[ \int_0^{\ruintime} \mathrm{e}^{-q t} \diff \PAR{L^\pi_t - \sum_{0 \leq s < t}\beta \Ind_{\BRA{\Delta L^\pi_s > 0}} } \right] , \quad x \in \reals .
\]
Note that due to the pure jump nature of the admissible controls, we wan rewrite the performance function as
\[
v_\pi (x) = \e_x\SBRA{\sum_{0 \leq t < \ruintime} \mathrm{e}^{-q t} \PAR{\Delta L^\pi_t - \beta \Ind_{\BRA{\Delta L^\pi_t > 0}} } }.
\]
At this point, we find relevant to present the following alternate representation of the performance function of an admissible strategy, which aloows us to interpret our problem as an infinite-horizon dividend problem where dividend payments are penalised by a level-dependent discounting. We omit the proof, as it is similar to that of Lemma 3.1 in \cite{mata-renaud_2024}.
\begin{lemma}\label{lem:alternate}
	For any $\pi \in \Pi$ we have
	\[
	v_\pi(x) = \e_x\SBRA{ \sum_{0 \leq t < \infty} \mathrm{e}^{-q t} \PAR{\Delta L^\pi_t - \beta \Ind_{\BRA{\Delta L^\pi_t > 0}} }}, \quad x \in \reals.
	\]
\end{lemma}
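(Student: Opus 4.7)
The plan is to condition on the natural filtration of the surplus and exploit the independence of $\mathbf{e}_1$ from $\mathcal{F}_\infty$. First I would move the upper limit $\ruintime$ inside an indicator, rewriting
\[
v_\pi(x) = \e_x\SBRA{\sum_{t \geq 0} \mathrm{e}^{-q t}\PAR{\Delta L^\pi_t - \beta \Ind_{\BRA{\Delta L^\pi_t > 0}}}\Ind_{\BRA{t < \ruintime}}}.
\]
Since $L^\pi$ is adapted to $\{\mathcal{F}_t\}$ and $\mathbf{e}_1$ is independent of $\mathcal{F}_\infty$, the definition \eqref{omega-ruin} of $\ruintime$ yields, for each $t \geq 0$,
\[
\p_x\PAR{t < \ruintime \mid \mathcal{F}_\infty} = \p_x\PAR{\mathbf{e}_1 > \int_0^t \omega(U^\pi_s) \diff s \,\Big|\, \mathcal{F}_\infty} = \exp\PAR{-\int_0^t \omega(U^\pi_s) \diff s}.
\]
Applying the tower property together with Fubini's theorem to interchange the countable sum with the (conditional) expectation then produces the announced identity, with the factor $\exp(-\int_0^t \omega(U^\pi_s) \diff s)$ playing the role of a path-dependent, level-dependent discount attached to each transaction. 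This recasts the $\omega$-bankruptcy model as an infinite-horizon problem with a state-dependent penalty, matching the interpretation given in the text preceding the statement.

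The main step to treat with care is the application of Fubini, since the summand in $t$ is signed (it contains the negative transaction cost $-\beta$). I would decompose the series into its two non-negative parts $\sum_t \mathrm{e}^{-qt}\Delta L^\pi_t \Ind_{\BRA{t < \ruintime}}$ and $\beta\sum_t \mathrm{e}^{-qt}\Ind_{\BRA{\Delta L^\pi_t > 0}}\Ind_{\BRA{t < \ruintime}}$, apply Tonelli to each separately to commute the sum with $\e_x[\,\cdot\mid \mathcal{F}_\infty]$, and then subtract; this manipulation is legitimate whenever $v_\pi(x)$ is well defined, and otherwise both sides of the claimed identity fail to make sense simultaneously. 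Since the argument is essentially identical to the proof of Lemma~3.1 in \cite{mata-renaud_2024}, with the pure-jump structure of $L^\pi$ replacing the integration by parts used there for continuous dividend processes, I would only outline the conditioning step in detail and defer the routine integrability verifications to that reference.
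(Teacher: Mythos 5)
Your argument is correct and follows exactly the route that the paper itself defers to (Lemma 3.1 of \cite{mata-renaud_2024}): condition on $\mathcal{F}_\infty$, use the independence of $\mathbf{e}_1$ to replace $\Ind_{\{t<\ruintime\}}$ by its conditional expectation $\exp\bigl(-\int_0^t\omega(U^\pi_s)\,\diff s\bigr)$, and commute the sum with the expectation by Tonelli applied separately to the two non-negative pieces of the signed summand. One remark: what your argument establishes is the identity with the path-dependent weight $\exp\bigl(-\int_0^t\omega(U^\pi_s)\,\diff s\bigr)$ attached to each term, whereas the displayed statement of Lemma~\ref{lem:alternate} omits this factor; that omission is evidently a typographical slip in the paper --- the lemma is invoked at the end of the verification argument precisely with the weight $Y_s=\exp\bigl\{-\int_0^s\homega(U_r)\,\diff r\bigr\}$, and without the $\omega$-discount the claimed equality would be false for any strategy that keeps paying after $\ruintime$ --- so the version you prove is the intended and correct one.
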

Our goal is to compute
\begin{equation}\label{eq.Value}
v_\ast (x) = \sup_{\pi \in \Pi} v_\pi (x) , \quad x \in \reals ,
\end{equation}
the value function of this control problem by finding an optimal strategy $\pi_\ast \in \Pi$, i.e., an admissible strategy such that
\[
v_{\pi_\ast} (x) = v_\ast (x) , \quad x \in \reals .
\]

\subsection{Bankruptcy rate function}

We provide a proper definition of a \textit{bankruptcy rate function}, a name that has been used already in \cite{gerber-et-al_2012}.

\begin{definition}\label{def:bankruptcy-rate-function}
	We say that $\omega \colon \reals \to [0,\infty)$ is a discounting intensity if:
	\begin{enumerate}
		\item it is non-increasing;
		\item it is \textit{ultimately constant}, i.e., there exist $a \in (-\infty,0]$ and $\phi > 0$ for which $\omega(x)=\phi$ for all $x \in (-\infty,a)$, and there exists $\rho \in [0, \omega(0-)]$ such that $\omega(x)=\rho$ for all $x \in [0,\infty)$;
		\item it is piecewise continuous, i.e., there exists a finite partition $\{a_1, a_2, \dots, a_n, a_{n+1}\}$ of $[a,0]$ given by $a = a_1 < a_2 < \cdots < a_n \leq a_{n+1} = 0$ and such that:
		\begin{enumerate}
			\item $\omega$ is continuous on each sub-interval $(a_k,a_{k+1})$ for $k \in \lbrace 1,2,\cdots,n \rbrace$;
			\item if $n \geq 2$, then $\omega(a_k-)>\omega(a_k+)$ for each $k \in \lbrace 2,\cdots,n \rbrace$.
		\end{enumerate}
	\end{enumerate}
\end{definition}
\begin{remark}
	In our model, we say that $\omega$ is a bankruptcy rate function if it is a discounting intensity with $\rho=0$.
\end{remark}

As opposed to other elements in the partition, a discounting intensity might or might not have jumps at $a$ and $0$. More precisely, we have $\phi=\omega(a-) \geq \omega(a+)$ and $\omega(0-) \geq \rho=\omega(0)$. Thus, if $\omega$ is continuous on $(-\infty,0)$, then $n=1$ and $a \leq 0$ and thus the only possible discontinuity point is now at $0$. This is the case for example if $\omega(x) = \phi \ind_{(-\infty,0)} (x) + \rho \ind_{[0,\infty)}(x)$. However, it is possible for $\omega$ to be continuous on $\reals$. For example,
\[
\omega(x) = \phi \ind_{(-\infty,-\phi)}(x) - x \ind_{[-\phi,0)}(x)
\]
is a continuous bankruptcy rate function.

\begin{remark}
A piecewise constant bankruptcy rate function yields a termination time that is already significantly more general than a Parisian ruin time with exponential delays. Indeed, Parisian ruin with exponential rate $\phi>0$ corresponds to the bankruptcy rate function $\omega(x) = \phi \ind_{(-\infty,0)}(x)$.
\end{remark}

%
%
\section{Scale functions}\label{sect:scale}
For each $q \geq 0$, there exists a function $W_q$ known as the \textit{scale function}, such that $W_q(x) = 0$ for all $x\in (-\infty,0)$, and it is strictly increasing on $(0,\infty)$. In addition, it is characterised by its Laplace transform in the following way
\begin{equation}\label{scale:function:def}
	\int_0^\infty \mathrm{e}^{-\theta x} W_q (x) \mathrm{d}x = \frac{1}{\psi(\theta)-q} , \quad \text{for all $\theta> \Phi(q),$}
\end{equation}
where $\Phi(q)\sup \left\lbrace \lambda \geq 0 \colon \psi(\lambda)=p \right\rbrace$. We refer the reader to \cite{kuznetsov-et-al_2012} for a detailed analysis on the theory of scale functions.\\

As a consequence of item 1 from Assumption \ref{standing-assumption}, we have that the scale function $W_q$ is continuously differentiable on $(0,\infty)$, for any $q \geq 0$; see, e.g., \cite{kuznetsov-et-al_2012}.

Recall that $q$ is the discount rate. Since \cites{avram-et-al_2007,loeffen_2008}, it has been widely known that the $q$-scale function $W_q$ is an essential object in the study of the optimal dividends problem with classical ruin. When exponential Parisian ruin is considered, as in \cite{renaud2019}, a second family of scale functions is needed. If Parisian ruin occurs at rate $\phi>0$, i.e., if $\omega(x) = \phi \ind_{(-\infty,0)}(x)$, then $Z_q (\cdot;\Phi(\phi+q))$ is the scale function of interest. In general, for fixed $0<q<s$, define, for each $x \in \reals$,
\begin{align}
	Z_q (x;\Phi(s)) &= \mathrm{e}^{\Phi(s)x} \left(1-(s-q) \int_0^x \mathrm{e}^{-\Phi(s)y} W_{q}(y) \mathrm{d}y \right)\label{eq:Zq-def} \\
	&= (s-q) \int_0^\infty \mathrm{e}^{-\Phi(s)y} W_{q}(x+y) \mathrm{d}y ,\notag
\end{align}
where for $x \leq 0$, we have $Z_q (x;\Phi(s))=\mathrm{e}^{\Phi(s) x}$.  In particular, we have $Z_q (x;\Phi(\phi+q))= \phi \int_0^\infty \mathrm{e}^{-\Phi(\phi+q)y} W_q(x+y) \mathrm{d}y$.

%
%
%

\subsection{Omega scale functions}\label{Sec:Solution:Complete}

For the control problem under study, another \textit{scale function}, which is a generalization of both $W_{q}(\cdot)$ and $Z_q (\cdot;\Phi(\phi+q))$, will be needed. A theory of scale functions based on a bankruptcy rate function $\omega$ (also called a \textit{killing intensity}) has been developed in \cite{li-palmowski_2018}. In that paper, the $\omega$-scale function $\cH^{\omega} \colon \reals \to \reals$ is defined as the solution of the following functional equation:
\begin{equation}\label{eq:functional-eq}
	\cH^{\omega}(x) = \mathrm e^{\Phi(\phi)(x-a)} + \int_{a}^x W_{\phi}(x-y) (\omega(y) - \phi) \cH^{\omega}(y) \diff y , \quad x \in \reals .
\end{equation}
See Lemma 2.1 and Section 2.4 in \cite{li-palmowski_2018} for more details.

In the above definition and in what follows, it is assumed that, for any integrable function $f$, if $a \geq b$ then $\int_a^b f(x) \diff x = 0$. In general, by $\int_a^b$ we mean $\int_{[a,b)}$.

\begin{remark}
	Note that~\eqref{eq:functional-eq} does not appear verbatim in \cite{li-palmowski_2018}. We have performed a translation in Equations~(2.22)-(2.23) of \cite{li-palmowski_2018} to obtain~\eqref{eq:functional-eq}. This modification is needed to be in agreement with our definition of a bankruptcy rate function.
\end{remark}

Below we list some key results regarding the analytic properties of the $\omega$-scale function $\cH^{\omega}$. We refer the reader to \cite{mata-renaud_2024} for detailed proofs.

\begin{theorem}[Theorem 2.1 in \cite{mata-renaud_2024}]\label{thm:solution-integral-eq}
	Under Assumption~\ref{standing-assumption}, the $\omega$-scale function $\cH^{\omega}$ has the following differentiability properties:
	\begin{enumerate}
		\item If $X$ has paths of UBV, then $\cH^{\omega}$ is continuously differentiable on $\reals$. In addition, if $\sigma >0$, then $\cH^{\omega}$ is twice continuously differentiable on $\reals \setminus \{a_1,a_2,\cdots,a_n\}$.
		\item If $X$ has paths of BV, then $\cH^{\omega}$ is:
		\begin{enumerate}
			\item continuously differentiable on $\reals \setminus \{a_1, a_2, \dots, a_n\}$ if $\omega(0-)=0$;
			\item continuously differentiable on $\reals \setminus \{a_1, a_2, \dots, a_n,a_{n+1}\}$ if $\omega(0-)>0$.
		\end{enumerate}
	\end{enumerate}
\end{theorem}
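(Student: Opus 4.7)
The plan is to bootstrap the regularity of $\cH^{\omega}$ from that of the classical scale function $W_\phi$ using the fixed-point representation~\eqref{eq:functional-eq}. After the change of variable $u = x - y$, that equation reads
\[
\cH^{\omega}(x) = \mathrm{e}^{\Phi(\phi)(x-a)} + \int_0^{x-a} W_\phi(u)\,(\omega(x-u)-\phi)\,\cH^{\omega}(x-u)\,\diff u,
\]
which is convenient because the behaviour of the convolution at $u=0$ is entirely governed by $W_\phi(0+)$. Under Assumption~\ref{standing-assumption}, $W_\phi$ is continuously differentiable on $(0,\infty)$, with $W_\phi(0+) = 0$ in the UBV case and $W_\phi(0+) = 1/c > 0$ in the BV case, where $c$ denotes the drift of $X$. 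Local boundedness of $\cH^{\omega}$ is inherited from the Picard construction in \cite{li-palmowski_2018} and will justify all the differentiations under the integral sign below.

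Differentiating via Leibniz's rule yields
\[
(\cH^{\omega})'(x) = \Phi(\phi)\mathrm{e}^{\Phi(\phi)(x-a)} + W_\phi(0+)\,(\omega(x-)-\phi)\,\cH^{\omega}(x) + \int_0^{x-a} W_\phi'(u)\,(\omega(x-u)-\phi)\,\cH^{\omega}(x-u)\,\diff u,
\]
and a dominated-convergence argument (using local integrability of $W_\phi'$ together with the local boundedness of $\cH^{\omega}$) shows that the rightmost convolution is continuous in $x$ on all of $\reals$. In the UBV case, $W_\phi(0+) = 0$ erases the boundary term, so $(\cH^{\omega})'$ is continuous on $\reals$, which gives item~(1). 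In the BV case, that boundary term inherits exactly the jumps of $\omega$: these lie in $\{a_1,\ldots,a_n\}$ in general, and additionally include $a_{n+1} = 0$ precisely when $\omega(0-) > 0$ (since $\omega(0) = \rho = 0$ for a bankruptcy rate function), which yields items~(2a) and~(2b).

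For the $C^2$ statement in the UBV case with $\sigma > 0$, I would iterate the argument starting from the expression for $(\cH^{\omega})'$. Here $W_\phi \in C^2((0,\infty))$ with $W_\phi'(0+) = 2/\sigma^2$, so a further differentiation produces an additional boundary contribution $W_\phi'(0+)(\omega(x-)-\phi)\cH^{\omega}(x)$ plus a convolution involving $W_\phi''$, the latter still continuous in $x$ by the same dominated-convergence reasoning. The new boundary term is now the sole source of discontinuity, located precisely at the jump points of $\omega$ in $\{a_1,\ldots,a_n\}$, as required. The main obstacle throughout is the careful bookkeeping at $u = 0$: one has to verify rigorously that the convolution integrals remain continuous in $x$ despite the non-smooth behaviour of $W_\phi'$ and $W_\phi''$ near zero, and to identify exactly which atoms $a_k$ are picked up by the boundary term $W_\phi(0+)(\omega(x-)-\phi)\cH^{\omega}(x)$ and by its $W_\phi'(0+)$ analogue.
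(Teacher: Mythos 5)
The paper never proves this statement: it is imported verbatim as Theorem~2.1 of \cite{mata-renaud_2024}, and the text explicitly defers all proofs to that reference, so there is no in-paper argument to compare against. Your route --- differentiate the Volterra equation \eqref{eq:functional-eq} and read the regularity of $\cH^{\omega}$ off the boundary term $W_\phi(0+)(\omega(x)-\phi)\cH^{\omega}(x)$ plus a convolution against $W_\phi'$ --- is the natural one and, as far as I can tell, the same strategy as in the cited reference. The mechanism you isolate ($W_\phi(0+)=0$ for unbounded variation versus $W_\phi(0+)=1/c>0$ for bounded variation, so the jumps of $\omega$ only enter the first derivative in the BV case) is exactly right, as is your identification of which atoms contaminate $(\cH^{\omega})'$, with $a_{n+1}=0$ included precisely when $\omega(0-)>0$.

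Two steps are asserted rather than proved and are where the real work lies. First, in the UBV case with $\sigma=0$ one has $W_\phi'(0+)=+\infty$, so both the Leibniz differentiation and your ``dominated-convergence argument'' for the continuity of $x\mapsto\int_0^{x-a}W_\phi'(u)(\omega(x-u)-\phi)\cH^{\omega}(x-u)\,\diff u$ hinge on the local integrability of $W_\phi'$ at $0+$; this holds, but only because $W_\phi$ is monotone with a finite limit at $0+$, and you should say so explicitly --- it is the one place the singularity of $W_\phi'$ could have broken the argument. Second, the $C^2$ claim needs $W_\phi\in C^2(0,\infty)$ when $\sigma>0$ (a nontrivial smoothness theorem for scale functions, not a consequence of Assumption~\ref{standing-assumption}(1) alone) and, more delicately, the local integrability of $W_\phi''$ at $0+$ so that the convolution against $W_\phi''$ is well defined and continuous; since $W_\phi''(0+)$ can be infinite when $\nu(0,\infty)=\infty$, this is not automatic and is most cleanly obtained from the convexity of $W_\phi'$, which is where the log-convexity of $x\mapsto\nu(x,\infty)$ from Assumption~\ref{standing-assumption}(2) actually enters your argument. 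Finally, your second-order boundary term $(2/\sigma^2)(\omega(x)-\phi)\cH^{\omega}(x)$ also jumps at $a_{n+1}=0$ whenever $\omega(0-)>0$, so you should reconcile that with the exceptional set $\{a_1,\dots,a_n\}$ appearing in item~(1) of the statement rather than silently dropping the point $0$.
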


\begin{proposition}[Proposition 2.1 in \cite{mata-renaud_2024}]\label{prop:log-convex-general}
	If the function $x \mapsto \nu(x,\infty)$ is log-convex on $(0,\infty)$, then $\mathcal{H}^{\omega \prime}$ is log-convex on $(0,\infty)$. In particular, it is also convex on $(0,\infty)$.
\end{proposition}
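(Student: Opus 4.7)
My plan is to reduce the claim to a corresponding property of the classical scale function $W_\phi$ via the integral equation~\eqref{eq:functional-eq}. The starting point is the well-known fact that, under Assumption~\ref{standing-assumption}(2), the derivative $W_\phi'$ is log-convex on $(0,\infty)$; this follows from the Laplace transform representation of $W_\phi$ combined with the log-convexity of $x \mapsto \nu(x,\infty)$, as established in the existing literature on scale functions (Loeffen, Kyprianou-Rivero-Song).

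Next, I would exploit the stability properties of log-convex functions: the class of positive log-convex functions on $(0,\infty)$ is closed under pointwise products, under sums (by H\"older's inequality), under integration against positive measures, and under pointwise limits whenever the limit remains positive. These closure properties allow a Picard iteration for the Volterra equation~\eqref{eq:functional-eq}: set $\mathcal{H}^\omega_0(x) = e^{\Phi(\phi)(x-a)}$ and
\[
\mathcal{H}^\omega_{n+1}(x) = e^{\Phi(\phi)(x-a)} + \int_a^x W_\phi(x-y)(\omega(y)-\phi)\mathcal{H}^\omega_n(y)\diff y .
\]
Standard Volterra arguments give $\mathcal{H}^\omega_n \to \mathcal{H}^\omega$ uniformly on compact sets, with the analogous convergence for derivatives via differentiation under the integral sign.

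The core inductive argument is then to show that $\mathcal{H}^{\omega\prime}_n$ is log-convex on $(0,\infty)$ for each $n \geq 0$. The base case is immediate since $\mathcal{H}^{\omega\prime}_0$ is an exponential. For the inductive step, differentiating the iteration formula and substituting $z = x-y$ produces an expression in which the integrand is (up to a sign) a product of $W_\phi'$ and $\mathcal{H}^\omega_n$, together with boundary contributions, and log-convexity is inherited using Step~1 and the stated closure properties. Passing to the limit yields the log-convexity of $\mathcal{H}^{\omega\prime}$ on $(0,\infty)$; the convexity statement then follows because $\mathcal{H}^{\omega\prime}$ is positive.

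\textbf{Main obstacle.} The factor $\omega(y)-\phi$ is non-positive throughout the integration domain, so the Picard iteration alternates in sign and the naive inductive argument breaks down: log-convex functions are stable under non-negative combinations but not under subtraction. The delicate point, which is where the argument in \cite{mata-renaud_2024} does the real work, is to regroup the iterates or, equivalently, to transform the Volterra equation (for instance by factoring out the exponential $e^{\Phi(\phi)x}$ or passing to an equivalent equation with a non-negative kernel) so that only non-negative combinations of log-convex functions appear in the induction. Handling the transition at $x=0$, where $\omega$ may have a jump and $W_\phi$ may fail to vanish (in the BV case), is the remaining technical nuisance that can be absorbed into the boundary term of the differentiation formula.
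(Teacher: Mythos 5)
Your plan stalls exactly where you say it does, and the step you defer is the entire content of the proposition, so as written this is a genuine gap rather than a proof. The Picard iteration on \eqref{eq:functional-eq} cannot be closed directly: since $\omega\leq\phi$ everywhere, the kernel $W_\phi(x-y)(\omega(y)-\phi)$ is non-positive, the iterates alternate in sign, and log-convexity is not stable under subtraction. Saying that one must "regroup the iterates or transform the equation so that only non-negative combinations appear" names the obstacle without removing it; no candidate transformation is exhibited, and it is not clear that any regrouping of the alternating Picard series would preserve log-convexity term by term.

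The actual resolution is more direct and dispenses with induction altogether. One rewrites \eqref{eq:functional-eq} with the reference killing rate taken to be the value of the discounting intensity on $[0,\infty)$ rather than $\phi$; this is the standard change-of-measure identity for $\omega$-scale functions and is quoted in this very paper (equation (17) of \cite{mata-renaud_2024}) as
\[
\cH^{\homega\prime}(x) \;=\; Z_q'(x-a;\Phi(\phi)) \;+\; \int_a^0 W_q'(x-y)\,\omega(y)\,\cH^{\homega}(y)\,\diff y, \qquad x>0 .
\]
Two features make this decisive. First, the integrand $W_q'(x-y)\,\omega(y)\,\cH^{\homega}(y)$ is non-negative, so only non-negative superpositions of log-convex functions occur. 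Second, because $\omega$ vanishes on $[0,\infty)$, for $x>0$ the integration limits freeze at the fixed interval $[a,0]$: the unknown function $\cH^{\homega}$ enters only through its values on $[a,0]$, which act as positive weights independent of $x$, so there is nothing to iterate. Each map $x\mapsto W_q'(x-y)$ with $y\leq 0$ is a translate of $W_q'$, which is log-convex on $(0,\infty)$ under the log-convex tail assumption (your Step 1), and $Z_q'(\cdot-a;\Phi(\phi))$ is itself a non-negative mixture of such translates by \eqref{eq:Zq-def}. The closure of positive log-convex functions under translation, non-negative scaling, addition and integration then gives the log-convexity of $\cH^{\homega\prime}$ on $(0,\infty)$, and convexity follows since a positive log-convex function is convex. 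Your proposal contains all the needed closure properties, but without this specific representation it does not reach the conclusion.
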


\section{Double barrier strategies}\label{Sec:Barrier}
An important type of strategy in impulse control models is the so-called $(c_1,c_2)$ strategy. This strategy is defined such that whenever the surplus process is above a given level $c_2$, then a lump sum payment is made which brings the surplus to level $c_1$.\\
Formally speaking, the $(c_1,c_2)$ strategy is defined as follows. Given two constants $c_1,c_2$ such that $c_2 > \beta + c_1$ and $c_1 \geq 0$, we define the sequence of stopping times $\BRA{\tau^{c_1,c_2}_k : k \geq 1}$ such that
\[
\tau^{c_1,c_2}_k := \inf \BRA{t\geq 0: X_t > [(X_0 \vee c_2) + (c_2 - c_1)(k-1)]}, \quad k \geq 1.
\]
The dividend process $(L^{c_1,c_2}_t)_{t\geq 0}$ is defined as
\[
L^{c_1,c_2}_t =\Ind_{\BRA{\tau^{c_1,c_2}_1 < t}}([X_0 \vee c_2] - c_1) + \sum_{k=2}^{\infty}\Ind_{\BRA{\tau^{c_1,c_2}_k<t}}(c_2-c_1), \quad t\geq 0.
\]
Consequently, the controlled process is written as $U^{c_1,c_2}_t := X_t - L^{c_1,c_2}_t$ for all $t \geq 0$. Note that the sequence of stopping times can be expressed in terms of $U^{c_1,c_2}$ as $\tau^{c_1,c_2}_1 = \inf\BRA{t\geq 0: U^{c_1,c_2}_t > c_2}$ and $\tau^{c_1,c_2}_k = \inf\BRA{t \geq \tau^{c_1,c_2}_{k-1}: U^{c_1,c_2}_t > c_2 }$ for $k \geq 2$.\\
Our goal is to determine the existence of a pair $(c_1^\ast,c_2^\ast)$ such that the associated strategy attains the supremum in \eqref{eq.Value}, i.e., that it is an optimal strategy. Moreover, under Assumption \ref{standing-assumption} that the tail of the L\'evy measure is log-convex, we will find conditions for uniqueness and also conditions to determine whether $c_1^\ast > 0$ or $c_1^\ast = 0$.
\subsection{Performance function of a $(c_1,c_2)$ strategy}
\begin{proposition}\label{prop:perf-barrier}
	Fix $c_2 > \beta + c_1 $ with $c_1\geq 0$. Let $\homega : \reals \to [0,\infty)$ be given by $\homega(x) = q + \omega(x)$, then we have
	\[
	v_{c_1,c_2} (x) =
	\begin{cases}
		\PAR{\frac{c_2-c_1-\beta}{\cH^{\homega}(c_2) - \cH^{\homega}(c_1)}} \cH^{\homega}(x) & \text{if $x \in (-\infty,c_2]$,} \\
		x-c_1-\beta + \PAR{\frac{c_2-c_1-\beta}{\cH^{\homega}(c_2) - \cH^{\homega}(c_1)}} \cH^{\homega}(c_1) & \text{if $x \in [c_2, \infty)$.}
	\end{cases}
	\]
\end{proposition}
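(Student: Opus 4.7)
The plan is to exploit the alternate representation from Lemma~\ref{lem:alternate} together with the renewal structure induced by the $(c_1,c_2)$ strategy. By that lemma, the performance function can be written without reference to the bankruptcy time $\ruintime$, as an infinite-horizon expectation in which the effective discount rate at state $y$ becomes $\homega(y) = q + \omega(y)$.

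For $x \in (-\infty,c_2]$, set $\tau := \tau^{c_1,c_2}_1$. Since $X$ is spectrally negative, the process $U^{c_1,c_2}$ coincides with $X$ on $[0,\tau)$ and creeps upward through $c_2$, so $U^{c_1,c_2}_{\tau-} = c_2$ and $\Delta L^{c_1,c_2}_\tau = c_2 - c_1$. After this lump sum, which incurs the fixed cost $\beta$, the controlled process restarts afresh from $c_1$. Applying the strong Markov property at $\tau$, and using that $L^{c_1,c_2}$ has no jumps on $[0,\tau)$, I obtain the renewal identity
\[
v_{c_1,c_2}(x) = \e_x\SBRA{\mathrm{e}^{-\int_0^\tau \homega(X_s)\diff s}} \cdot \PAR{(c_2 - c_1 - \beta) + v_{c_1,c_2}(c_1)}.
\]
The next step is to invoke the $\homega$-killed up-crossing identity from \cite{li-palmowski_2018} to replace the expectation above by $\cH^{\homega}(x)/\cH^{\homega}(c_2)$, after checking that $\homega$ is itself a discounting intensity in the sense of Definition~\ref{def:bankruptcy-rate-function} (it inherits monotonicity, piecewise continuity, and ultimate constancy from $\omega$).

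Setting $x = c_1$ in the resulting equation gives a scalar equation whose solution is
\[
v_{c_1,c_2}(c_1) = \frac{(c_2-c_1-\beta)\,\cH^{\homega}(c_1)}{\cH^{\homega}(c_2) - \cH^{\homega}(c_1)},
\]
and substituting back yields the stated formula on $(-\infty,c_2]$. For $x \in [c_2,\infty)$, the strategy immediately pays a lump sum of size $x-c_1$ with fixed cost $\beta$ and restarts from $c_1$, so $v_{c_1,c_2}(x) = (x - c_1 - \beta) + v_{c_1,c_2}(c_1)$, which recovers the second case and, as a consistency check, agrees with the first case at $x = c_2$.

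The main obstacle is the clean invocation of the $\homega$-killed up-crossing identity, since it is stated in \cite{li-palmowski_2018} in terms of a killing function that must fit the framework of \eqref{eq:functional-eq}; in particular one must confirm that adding the constant $q$ to $\omega$ on all of $\reals$ is correctly absorbed into the definition of $\cH^{\homega}$ used here. A minor secondary point is justifying the passage from the telescoping sum of contributions across successive $\tau^{c_1,c_2}_k$ to the closed-form geometric sum implicit in the calculation; this follows from monotone convergence together with the strict inequality $\cH^{\homega}(c_1)/\cH^{\homega}(c_2)<1$, which ensures absolute convergence of the associated geometric series.
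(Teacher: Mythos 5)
Your proposal is correct and follows essentially the same route as the paper: apply the strong Markov property at the first up-crossing of $c_2$ (using spectral negativity to guarantee the process is exactly at $c_2$ there), convert the event $\{\tau^+_{c_2}<\ruinbarrier\}$ into the exponential functional with killing rate $\homega=q+\omega$, invoke Theorem~2.5 of \cite{li-palmowski_2018} to identify the resulting expectation with $\cH^{\homega}(x)/\cH^{\homega}(c_2)$, and close the resulting linear equation. The only (cosmetic) difference is that you solve the fixed-point equation for $v_{c_1,c_2}(c_1)$ while the paper evaluates at $x=c_2$ and solves for $v_{c_1,c_2}(c_2)$.
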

\begin{proof}
Consider the admissible $(c_1,c_2)$ strategy, and denote its respective controlled process by $(U^{c_1,c_2}_t)_{t\geq 0}$. Let $x \in (-\infty,c_2)$, then by the strong Markov property we get
\[
v_{c_1,c_2}(x) = \e_x\SBRA{e^{-q\tau_{c_2}^+} ; \tau_{c_2}^+ < \sigma_{\omega}}v_{c_1,c_2}(c_2),
\]
where $\tau_{c_2}^+ := \inf\BRA{t\geq 0: U^{c_1,c_2}_t > c_2}$. Note that we have the equality of events $\BRA{ \tau_{c_2}^+ < \sigma_{\omega} } = \BRA{ \int_0^{\tau_{c_2}^+} \omega(U^{c_1,c_2}_t) \diff t < \mathbf{e}_1}$. Integrating w.r.t. the law of $\mathbf{e}_1$ we get
\begin{align*}
	v_{c_1,c_2}(x) &= \e_x\SBRA{ \exp\BRA{-\int_0^{\tau_{c_2}^+} \homega(U^{c_1,c_2}_t) \diff t} ; \tau_{c_2}^+ < \infty }v_{c_1,c_2}(c_2)\\
	&= \frac{\cH^{\homega}(x)}{\cH^{\homega}(c_2)}v_{c_1,c_2}(c_2),
\end{align*}
where in the second equality we have used Theorem 2.5 from \cite{li-palmowski_2018}.\\
On the other hand, for $x \in [c_2,\infty)$ by the definition of the $(c_1,c_2)$ strategy there is a lump sum payment of size $x-c_1$ at time 0 to bring the reserves to level $c_1$, then we have
\begin{align*}
v_{c_1,c_2}(x) &= x - c_1 -\beta + v_{c_1,c_2}(c_1)\\
&= x - c_1 - \beta + \frac{\cH^{\homega}(c_1)}{\cH^{\homega}(c_2)}v_{c_1,c_2}(c_2).
\end{align*}
In particular, for $x = c_2$ we have
\[
v_{c_1,c_2}(c_2) = c_2-c_1-\beta + \frac{\cH^{\homega}(c_1)}{\cH^{\homega}(c_2)}v_{c_1,c_2}(c_2).
\]
Solving for $v_{c_1,c_2}(c_2)$ yields the desired expression.
\end{proof}
\subsection{A candidate optimal strategy}
Equipped with the representation of $v_{c_1,c_2}$ in terms of the generalised scale function, our next goal is to find values $(c^\ast_1,c^\ast_2)$ that will give the \textit{best} strategy, i.e., the one that maximises across all admissible impulse policies and attains the supremum at \eqref{eq.Value}. Given the expression of the performance function, our initial guess will be to find the values $(c^\ast_1,c^\ast_2)$ that minimise
\[
g(c_1,c_2) = \frac{\cH^{\homega}(c_2) - \cH^{\homega}(c_1)}{c_2-c_1-\beta},
\]
where the domain of $g$ is given by $\text{dom}(g) = \BRA{(c_1,c_2): c_1 \geq 0, \, c_2 > \beta + c_1}$.\\
Let $C^\ast$ be the set of minimisers of $g$, i.e.
\[
C^\ast = \BRA{(c_1^\ast, c_2^\ast) \in \text{dom}(g): g(c_1^\ast, c_2^\ast) = \inf_{(c_1,c_2)\in \text{dom}(g)}g(c_1,c_2)}.
\]
\begin{proposition}
	The set $C^\ast$ is non-empty and for each $(c_1^\ast, c_2^\ast) \in C^\ast$ we have
	\[
	\cH^{\homega\prime}(c_2^\ast) = \frac{\cH^{\homega}(c_2^\ast) - \cH^{\homega}(c_1^\ast)}{c_2^\ast-c_1^\ast-\beta}.
	\]
	In addition, one of the following holds:
	\begin{enumerate}
		\item $\cH^{\homega\prime}(c_2^\ast) = \cH^{\homega\prime}(c_1^\ast)$,
		\item $c_1^\ast = 0$.
	\end{enumerate}
\end{proposition}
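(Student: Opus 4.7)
The plan is to treat this as a two-variable constrained optimization on the open set $\text{dom}(g) = \{(c_1,c_2) : c_1 \geq 0, \, c_2 > c_1 + \beta\}$. I would first establish coercivity of $g$ to ensure the minimum is attained, and then read off the stated conditions from first-order calculus, handling the boundary $c_1=0$ separately.

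For existence, I would show that $g$ blows up along every way of leaving a compact subset of $\text{dom}(g)$. Three scenarios cover this. First, as $c_2 - c_1 - \beta \to 0^+$, the denominator vanishes while the numerator stays bounded below by $\cH^{\homega}(c_1 + \beta) - \cH^{\homega}(c_1) > 0$, so $g \to \infty$. Second, as $c_2 \to \infty$ with $c_1$ bounded, the numerator grows at the exponential rate encoded in $\Phi(\phi) > 0$ (extracted from \eqref{eq:functional-eq} together with the classical asymptotics $W_{\phi}(x) \sim e^{\Phi(\phi)x}/\psi'(\Phi(\phi))$), which dominates the linear denominator. Third, as $c_1 \to \infty$ (which forces $c_2 \to \infty$), the mean value theorem gives $g(c_1,c_2) = \frac{c_2-c_1}{c_2-c_1-\beta}\, \cH^{\homega\prime}(\xi)$ for some $\xi \in (c_1,c_2)$, and the convexity of $\cH^{\homega\prime}$ from Proposition~\ref{prop:log-convex-general} forces $\cH^{\homega\prime}(\xi) \to \infty$. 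Together with continuity of $g$, these limits confine any minimising sequence to a compact subset of $\text{dom}(g)$, so $C^{\ast} \neq \emptyset$.

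For the derivative conditions, Theorem~\ref{thm:solution-integral-eq} ensures $\cH^{\homega}$ is continuously differentiable at every $c>0$, so on the interior of the domain direct computation gives
\[
\partial_{c_2} g = \frac{\cH^{\homega\prime}(c_2)(c_2-c_1-\beta) - (\cH^{\homega}(c_2)-\cH^{\homega}(c_1))}{(c_2-c_1-\beta)^2}, \qquad \partial_{c_1} g = \frac{-\cH^{\homega\prime}(c_1)(c_2-c_1-\beta) + (\cH^{\homega}(c_2)-\cH^{\homega}(c_1))}{(c_2-c_1-\beta)^2}.
\]
Since $c_2^{\ast} > \beta \geq 0$, the stationarity condition $\partial_{c_2} g (c_1^{\ast}, c_2^{\ast}) = 0$ yields the claimed identity $\cH^{\homega\prime}(c_2^{\ast}) = g(c_1^{\ast}, c_2^{\ast})$. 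If $c_1^{\ast} > 0$, then the interior stationarity $\partial_{c_1} g(c_1^{\ast}, c_2^{\ast}) = 0$ forces $\cH^{\homega\prime}(c_1^{\ast}) = g(c_1^{\ast}, c_2^{\ast}) = \cH^{\homega\prime}(c_2^{\ast})$, which is case (1); otherwise we must have $c_1^{\ast} = 0$, which is case (2).

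The main obstacle is the coercivity step, specifically extracting a quantitative growth rate for $\cH^{\homega}$ from the integral equation~\eqref{eq:functional-eq} so as to rule out minimising sequences that drift to infinity. A secondary technical nuisance is that in the BV case with $\homega(0-) > q$ the function $\cH^{\homega}$ may fail to be $C^1$ at $0$; at a minimiser with $c_1^{\ast} = 0$ the first-order condition in $c_1$ should then be phrased as a right-derivative inequality $\cH^{\homega\prime}(0+) \leq \cH^{\homega\prime}(c_2^{\ast})$, which is however automatic from the monotonicity of $\cH^{\homega\prime}$ given by Proposition~\ref{prop:log-convex-general}.
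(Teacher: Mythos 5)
Your proposal follows essentially the same route as the paper: establish coercivity of $g$ along every escape route from a compact subset of $\mathrm{dom}(g)$ (the line $c_2=c_1+\beta$, $c_2\to\infty$, $c_1\to\infty$), then read off the identity and the dichotomy from the first-order conditions, treating $c_1^\ast=0$ as the boundary case. The one step that does not hold as written is the claim that ``the convexity of $\cH^{\homega\prime}$ forces $\cH^{\homega\prime}(\xi)\to\infty$'': a convex function on $(0,\infty)$ can be bounded (indeed non-increasing and convergent), so convexity alone does not yield divergence. The paper instead gets unboundedness of $\cH^{\homega\prime}$ from the explicit representation $\cH^{\homega\prime}(\xi) = Z_q'(\xi-a;\Phi(\phi)) + \int_a^0 W_q'(\xi-y)\,\omega(y)\,\cH^{\homega}(y)\,\diff y$ (equation (17) of \cite{mata-renaud_2024}), whose terms manifestly tend to $+\infty$; your own remark about extracting exponential growth from \eqref{eq:functional-eq} would serve the same purpose, so the gap is easily repaired but must be filled before the $c_1\to\infty$ (and, in your ordering, the uniform positivity of the numerator along the line $c_2=c_1+\beta$) arguments are complete. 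Everything else, including the explicit partial derivatives and the handling of the boundary $c_1^\ast=0$, matches the paper's proof.
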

\begin{proof}
	Let $(c_1,c_2) \in \text{dom}(g)$, then by the mean value theorem for some $\xi \in [c_1,c_2]$ we have
	\begin{align*}
	g(c_1,c_2) &= \PAR{\frac{c_2-c_1}{c_2-c_1-\beta}} \cH^{\homega\prime}(\xi)\\
	&\geq \PAR{\frac{c_2-c_1}{c_2-c_1-\beta}} \min_{\xi \in [c_1,c_2]} \cH^{\homega\prime}(\xi)\\
	& > \min_{\xi \in [c_1,c_2]} \cH^{\homega\prime}(\xi).
	\end{align*}
	From equation (17) in \cite{mata-renaud_2024}we have that
	\[
	\cH^{\homega\prime}(\xi) = Z_q'(\xi-a;\Phi(\phi)) + \int_a^0 W_q'(\xi-y)\omega(y) \cH^{\homega}(y) \diff y.
	\] 
	It follows easily that $\cH^{\homega\prime}(\xi) \to +\infty$ as $\xi \to \infty$, which implies that the minimum of $g$ is not attained when $c_1 \to \infty$. This further implies that there exists $C_1 > 0$ such that
	\[
	\inf_{\text{dom}(g)} g(c_1,c_2) = \inf_{\text{dom}(g), c_1\leq C_1}g(c_1,c_2).
	\]
	Similarly, we show that the minimum is not attained when $c_2 \to \infty$.
	\begin{align*}
		\inf_{c_1 \in [0,C_1]}g(c_1,c_2) &= \inf_{c_1 \in [0,C_1]} \frac{\cH^{\homega}(c_2)}{c_2-c_1-\beta} - \frac{\cH^{\homega}(c_1)}{c_2-c_1-\beta} \\
		& \geq \frac{\cH^{\homega}(c_2)}{c_2-\beta} - \frac{\cH^{\homega}(C_1)}{c_2-C_1-\beta},
	\end{align*}
	it follows that the right-hand side of the inequality goes to $+\infty$ as $c_2 \to \infty$. Hence, the minimum of $g$ is not attained when $c_2 \to \infty$.\\
	Finally, we show that the minimum is not attained when $(c_1,c_2)$ converges to the line $c_2 = c_1 + \beta$. By an application of the mean value theorem we have
	\begin{align*}
		g(c_1,c_2) &\geq \PAR{\frac{c_2-c_1}{c_2-c_1-\beta}} \min_{\xi \in [0,C_1]}\cH^{\homega\prime}(\xi)\\
		&\geq \PAR{\frac{\beta}{c_2-c_1-\beta}} \min_{\xi \in [0,C_1]}\cH^{\homega\prime}(\xi),
	\end{align*}
	where $\min_{\xi \in [0,C_1]}\cH^{\homega\prime}(\xi)>0$. It follows that the right-hand side goes to $+\infty$ as $(c_1,c_2)$ converge to the line $c_2 = c_1 + \beta$, hence the minimum is not attained.\\
	The previous analysis allows us to conclude that the minimiser $(c_1^\ast,c_2^\ast)$ is either an interior point of $\text{dom}(g)$ or is such that $c_1^\ast = 0$.\\
	In case that $(c_1^\ast,c_2^\ast)$ is an interior point, then $g$ is differentiable and satisfies
	\[
	\frac{\partial g}{\partial c_1}(c_1^\ast,c_2^\ast) = 0, \quad\text{and,}\quad \frac{\partial g}{\partial c_2}(c_1^\ast,c_2^\ast)=0.
	\]
	After some algebraic manipulations, we observe that
	\[
	\cH^{\homega\prime}(c_2^\ast) = \frac{\cH^{\homega}(c_2^\ast) - \cH^{\homega}(c_1^\ast)}{c_2^\ast-c_1^\ast-\beta},
	\]
	hence item (1) follows.\\
	In the case where $c_1^\ast=0$, then $c_2^\ast$ minimises the function $g_0(c_2):= g(0,c_2)$. It follows that $g_0'(c_2^\ast)=0$, hence we also have $\cH^{\homega\prime}(c_2^\ast) = \frac{\cH^{\homega}(c_2^\ast) - \cH^{\homega}(0)}{c_2^\ast-\beta}$.
\end{proof}
\begin{corollary}\label{cor_definetti}
	For each $(c_1^\ast,c_2^\ast)\in C^\ast$ we have
	\[
	v_{c_1^\ast,c_2^\ast}(x) = \begin{cases}
		 \frac{\cH^{\homega}(x)}{\cH^{\homega\prime}(c_2^\ast)} & \text{if $x \in (-\infty,c_2^\ast]$,} \\
		x-c_2^\ast + \frac{\cH^{\homega}(c_2^\ast)}{\cH^{\homega\prime}(c_2^\ast)} & \text{if $x \in [c_2^\ast, \infty)$.}
	\end{cases}
	\]
\end{corollary}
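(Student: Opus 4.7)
The plan is to obtain the corollary as an immediate algebraic consequence of Proposition~4.1 combined with the first-order condition established in the preceding proposition, which states that for any $(c_1^\ast, c_2^\ast) \in C^\ast$,
\[
\cH^{\homega\prime}(c_2^\ast) = \frac{\cH^{\homega}(c_2^\ast) - \cH^{\homega}(c_1^\ast)}{c_2^\ast - c_1^\ast - \beta}.
\]
This identity lets us replace the common prefactor
\[
\frac{c_2^\ast - c_1^\ast - \beta}{\cH^{\homega}(c_2^\ast) - \cH^{\homega}(c_1^\ast)}
\]
appearing in Proposition~4.1 by $1/\cH^{\homega\prime}(c_2^\ast)$.

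First, I would handle the range $x \in (-\infty, c_2^\ast]$. Substituting $(c_1, c_2) = (c_1^\ast, c_2^\ast)$ into the first branch of the formula from Proposition~4.1 yields
\[
v_{c_1^\ast, c_2^\ast}(x) = \PAR{\frac{c_2^\ast - c_1^\ast - \beta}{\cH^{\homega}(c_2^\ast) - \cH^{\homega}(c_1^\ast)}} \cH^{\homega}(x) = \frac{\cH^{\homega}(x)}{\cH^{\homega\prime}(c_2^\ast)},
\]
where the second equality uses the first-order condition recalled above. This gives the first case of the corollary directly.

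Next, for $x \in [c_2^\ast, \infty)$ the second branch of Proposition~4.1 gives
\[
v_{c_1^\ast, c_2^\ast}(x) = x - c_1^\ast - \beta + \frac{\cH^{\homega}(c_1^\ast)}{\cH^{\homega\prime}(c_2^\ast)}.
\]
To match the claimed expression, I would rewrite $x - c_1^\ast - \beta$ as $(x - c_2^\ast) + (c_2^\ast - c_1^\ast - \beta)$ and then invoke the first-order condition once more, which reads $c_2^\ast - c_1^\ast - \beta = (\cH^{\homega}(c_2^\ast) - \cH^{\homega}(c_1^\ast))/\cH^{\homega\prime}(c_2^\ast)$. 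After combining, the $\cH^{\homega}(c_1^\ast)$ terms cancel and what remains is exactly $x - c_2^\ast + \cH^{\homega}(c_2^\ast)/\cH^{\homega\prime}(c_2^\ast)$, as required.

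Since both cases reduce to a one-line substitution, there is no genuine obstacle here; the only thing to verify carefully is that the two branches agree at $x = c_2^\ast$, which is automatic from the derivation but could be noted explicitly as a continuity check. The content of the statement is really a cleaner reformulation of Proposition~4.1 using the optimality relation, emphasising that the slope of the performance function at $c_2^\ast$ equals $1$ (the standard smooth-fit/free-boundary condition at the upper barrier).
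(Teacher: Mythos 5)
Your proposal is correct and is precisely the argument the paper intends: the corollary is stated without proof as an immediate consequence of substituting the first-order condition $\cH^{\homega\prime}(c_2^\ast) = \bigl(\cH^{\homega}(c_2^\ast) - \cH^{\homega}(c_1^\ast)\bigr)/(c_2^\ast-c_1^\ast-\beta)$ into the formula of Proposition~\ref{prop:perf-barrier}, exactly as you do. Your algebra in both branches checks out, so there is nothing to add.
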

Note that this representation coincides with the performance function of a barrier strategy at level $c_2^\ast$ in De Finetti's problem under state dependent ruin, as has been studied in \cite{mata-renaud_2024}. This representation will be useful in the proofs of optimality.
\section{Verification of Optimality}\label{Sec:Verification}
We prove that the performance function of a $(c_1^\ast,c_2^\ast)$ strategy is optimal for any $(c_1^\ast,c_2^\ast) \in C^\ast$.\\
To prove optimality, we need the following verification lemma which states the necessary conditions for a strategy to be optimal.
\begin{lemma}[Verification Lemma]
	Suppose that $\hat{\pi}$ is an admissible impulse control strategy such that $v_{\hat{\pi}}$ is sufficiently smooth on $\reals$, and satisfies
	\begin{align}
		(\Gamma - \homega(\cdot))v_{\hat{\pi}}(x) &\leq 0, &\text{for almost all } x \in \reals;\label{variational:1}\\
		v_{\hat{\pi}}(x) - v_{\hat{\pi}}(y ) & \geq x-y-\beta, & x\geq y.\label{variational:2}
	\end{align}
	Then $v_{\hat{\pi}} = v_{\ast}$ for almost every $x\in \reals$ and hence $\hat{\pi}$ is an optimal strategy.
\end{lemma}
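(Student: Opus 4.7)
\smallskip
\noindent\textbf{Proof plan.} The plan is the standard verification argument: since $\hat\pi \in \Pi$, we automatically have $v_{\hat\pi}(x) \leq v_\ast(x)$, so it suffices to show the reverse inequality $v_{\hat\pi}(x) \geq v_\pi(x)$ for an arbitrary admissible $\pi \in \Pi$. To that end, consider the controlled surplus $U^\pi = X - L^\pi$ and the semi\-martingale
\[
Y_t := \exp\PAR{-q t - \int_0^t \omega(U^\pi_s)\diff s}\, v_{\hat\pi}(U^\pi_t).
\]
Since $L^\pi$ is a pure-jump process, between jumps of $L^\pi$ the process $U^\pi$ follows the SNLP $X$, so applying Meyer--It\^o's change-of-variable formula (justified by the smoothness hypothesis on $v_{\hat\pi}$ via Theorem~\ref{thm:solution-integral-eq}) yields
\begin{align*}
Y_t &= v_{\hat\pi}(x) + \int_0^t e^{-qs-\int_0^s \omega(U^\pi_u)\diff u}\,(\Gamma-\homega)v_{\hat\pi}(U^\pi_{s-})\,\diff s \\
&\quad + \sum_{0\leq s \leq t,\,\Delta L^\pi_s>0} e^{-qs-\int_0^s \omega(U^\pi_u)\diff u}\SBRA{v_{\hat\pi}(U^\pi_{s-}-\Delta L^\pi_s)-v_{\hat\pi}(U^\pi_{s-})} + M^\pi_t,
\end{align*}
where $M^\pi$ is a local martingale assembled from the compensated small-jump and diffusion parts of $X$.

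\smallskip
\noindent Each of the two explicit terms is then controlled by the hypotheses. Inequality~\eqref{variational:1} forces the integrand in the Lebesgue integral to be nonpositive almost everywhere on the path (using that the law of $U^\pi_{s-}$ is absolutely continuous outside of a negligible set of times). For the jump sum, applying~\eqref{variational:2} with $x=U^\pi_{s-}$ and $y=U^\pi_{s-}-\Delta L^\pi_s$ (which is $\leq x$ by admissibility) gives
\[
v_{\hat\pi}(U^\pi_{s-}-\Delta L^\pi_s)-v_{\hat\pi}(U^\pi_{s-}) \leq -\Delta L^\pi_s + \beta\,\Ind_{\BRA{\Delta L^\pi_s>0}}.
\]
Localising $M^\pi$ by a sequence of stopping times $T_n \uparrow \infty$ and taking expectations, the two inequalities combine to
\[
v_{\hat\pi}(x) \geq \e_x\SBRA{Y_{t\wedge T_n}} + \e_x\SBRA{\sum_{0\leq s \leq t \wedge T_n} e^{-qs-\int_0^s \omega(U^\pi_u)\diff u}\PAR{\Delta L^\pi_s - \beta\,\Ind_{\BRA{\Delta L^\pi_s>0}}}}.
\]

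\smallskip
\noindent To convert the discounted killing into the actual bankruptcy time, I would use the standard identity
\[
\e_x\SBRA{e^{-\int_0^s \omega(U^\pi_u)\diff u} f_s} = \e_x\SBRA{f_s \Ind_{\BRA{s<\ruintime}}}
\]
valid for any bounded adapted $f$, which follows from the independence of $\mathbf{e}_1$ and the definition~\eqref{omega-ruin} of $\ruintime$. Sending $n\to\infty$ (using $v_{\hat\pi}$ smoothness to obtain uniform integrability, or truncating $\pi$ to bound the dividend stream) and then $t\to\infty$, the boundary term $\e_x[Y_{t}]$ is shown to be nonnegative---since $v_{\hat\pi} \geq 0$, as it is the performance function of an admissible strategy---giving
\[
v_{\hat\pi}(x) \geq \e_x\SBRA{\sum_{0\leq s < \ruintime} e^{-qs}\PAR{\Delta L^\pi_s - \beta \Ind_{\BRA{\Delta L^\pi_s>0}}}} = v_\pi(x),
\]
where the last identity is Lemma~\ref{lem:alternate} combined with the definition of $v_\pi$.

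\smallskip
\noindent\textbf{Main obstacle.} The most delicate step is justifying the change-of-variable formula under the prescribed regularity. If $X$ has paths of bounded variation, $v_{\hat\pi}$ need only be $C^1$ away from the finitely many discontinuity points of $\omega$, so one should invoke a pathwise It\^o formula \textit{à la} Peskir rather than the classical $C^2$ version; if $\sigma>0$, a Meyer--It\^o formula with local time at the atypical points is required. A secondary difficulty is the localisation/limiting step: one must either verify that $v_{\hat\pi}$ has at most linear growth (so that dominated convergence applies uniformly in $\pi$) or restrict attention first to strategies with bounded dividend rate and pass to the limit—both routes are feasible, but the growth bound is what really uses the explicit form of $v_{\hat\pi}$ coming from Proposition~\ref{prop:perf-barrier}.
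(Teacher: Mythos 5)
Your proposal is correct and follows essentially the same route as the paper: apply the Meyer--It\^o formula to the killed/discounted process $\exp\PAR{-\int_0^t \homega(U^\pi_s)\,\diff s} v_{\hat\pi}(U^\pi_t)$, bound the drift term by \eqref{variational:1} and the jump sum by \eqref{variational:2}, localise, take expectations, and identify the limit with $v_\pi$ via Lemma~\ref{lem:alternate}. The technical caveats you flag (pathwise regularity for the change-of-variable formula and the uniform-integrability step in the localisation limit) are the same ones the paper implicitly relies on.
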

\begin{proof}
	Throughout this proof we write $h = v_{\hat{\pi}}$. Let $\pi$ be an admissible strategy, and $U$ its associated controlled process. Since $h$ is sufficiently smooth, an application of Meyer-It\^{o}'s formula (see Theorem 70 in \cite{protter_2004}) to $Y_t h(U_t)$, with $Y_t := \exp\BRA{\int_0^t \homega(U_s)\diff s}$, gives:
	\begin{align*}
		Y_t h(U_t) - h (U_0) &= \int_{0+}^t Y_s\left( \Gamma h (U_s) - (\omega(U_s)+q) h (U_s) \right) \diff s - \int_{0}^t Y_s h^{\prime} (U_s) \diff L^c_s \\
		& \qquad - \sum_{0 \leq s < t} Y_s \left\lbrace h (X_s-L_{s-}) - h (X_s - L_{s-} - \Delta L_s) \right\rbrace + \int_{0+}^t Y_s \diff M_s ,
	\end{align*}
	where $t \mapsto \int_{0+}^t Y_s \diff M_s$ is a local martingale.\\
	Recall that admissible strategies are pure jump processes and $\Delta L_t \geq \beta$ for all $t\geq 0$, then applying inequality \eqref{variational:2}, we have
	\begin{align*}
		Y_t h(U_t) - h (U_0) &\leq \int_{0+}^t Y_s\left( \Gamma h (U_s) - (\omega(U_s)+q) h (U_s) \right) \diff s - \sum_{0 \leq s < t} Y_s(\Delta L_s - \beta) + \int_{0+}^t Y_s \diff M_s.
	\end{align*}
	In addition, since for almost all $x \in \reals \setminus \{a_1,\dots,a_n,0\}$ we have the variational inequality \eqref{variational:1} then we can further write
	\[
	Y_t h(U_t) - h (U_0) \leq - \sum_{0 \leq s < t} Y_s(\Delta L_s - \beta) + \int_{0+}^t Y_s \diff M_s.
	\]
	Now, consider $(T_n)_{n\geq 1}$ a localising sequence. Applying the last inequality at $t=T_n$ and taking expectations, we can write
	\[
	h(x) \geq \e_x\SBRA{Y_{T_n} h(U_{T_n})} + \e_x\SBRA{ \sum_{0 \leq s < T_n} Y_s(\Delta L_s - \beta) }.
	\]
	Finally, by taking the limit as $n \to \infty$, we obtain
	\[
	h(x) \geq \e_x\SBRA{ \sum_{0 \leq s < \infty} Y_s(\Delta L_s - \beta) } = v_\pi(x),
	\]
	where in the last equality we have used Lemma \ref{lem:alternate}. Since $\pi$ is arbitrary, the result follows.
\end{proof}
\begin{lemma}\label{lem:HJB:1}
	Let $(c_1^\ast,c_2^\ast) \in C^\ast$. Then for $x \geq y \geq 0$,
	\begin{equation}\label{eq:lin:ineq}
		v_{c_1^\ast,c_2^\ast}(x) - v_{c_1^\ast,c_2^\ast}(y) \geq x - y -\beta.
	\end{equation}
\end{lemma}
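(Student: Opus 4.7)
The plan is to rewrite the inequality in terms of the auxiliary function $\Phi(x) := v_{c_1^\ast,c_2^\ast}(x) - x$, since \eqref{eq:lin:ineq} is equivalent to the oscillation bound $\Phi(y) - \Phi(x) \leq \beta$ for $0 \leq y \leq x$. By Corollary \ref{cor_definetti}, $\Phi(x) = \cH^{\homega}(x)/\cH^{\homega\prime}(c_2^\ast) - x$ on $(-\infty, c_2^\ast]$ and $\Phi$ is constant on $[c_2^\ast, \infty)$. Hence it suffices to establish the bound for $0 \leq y \leq x \leq c_2^\ast$, since any $x > c_2^\ast$ can be replaced by $c_2^\ast$ without changing $\Phi(x)$.

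On $(0, c_2^\ast]$, one has $\Phi'(x) = [\cH^{\homega\prime}(x) - \cH^{\homega\prime}(c_2^\ast)]/\cH^{\homega\prime}(c_2^\ast)$, so the monotonicity of $\Phi$ is governed by comparing $\cH^{\homega\prime}(x)$ with $\cH^{\homega\prime}(c_2^\ast)$. By Proposition \ref{prop:log-convex-general}, $\cH^{\homega\prime}$ is convex on $(0, \infty)$, and I would split on the dichotomy of the previous proposition. In Case (1), $\cH^{\homega\prime}(c_1^\ast) = \cH^{\homega\prime}(c_2^\ast)$; convexity forces $\cH^{\homega\prime} \leq \cH^{\homega\prime}(c_2^\ast)$ on $[c_1^\ast, c_2^\ast]$ and $\cH^{\homega\prime} \geq \cH^{\homega\prime}(c_2^\ast)$ on $[0, c_1^\ast]$, so $\Phi$ is non-decreasing on $[0, c_1^\ast]$ and non-increasing on $[c_1^\ast, c_2^\ast]$. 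In Case (2), $c_1^\ast = 0$, and the boundary optimality $\partial_{c_1} g(0, c_2^\ast) \geq 0$, combined with the first-order condition in $c_2$, reduces to $\cH^{\homega\prime}(0+) \leq \cH^{\homega\prime}(c_2^\ast)$; convexity then propagates this to $[0, c_2^\ast]$, making $\Phi$ non-increasing there. In both cases $\max_{[0, c_2^\ast]} \Phi$ is attained at $c_1^\ast$.

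Finally, the identity $\cH^{\homega\prime}(c_2^\ast)(c_2^\ast - c_1^\ast - \beta) = \cH^{\homega}(c_2^\ast) - \cH^{\homega}(c_1^\ast)$ supplied by the previous proposition yields the key accounting $\Phi(c_1^\ast) - \Phi(c_2^\ast) = \beta$ by a direct computation. Combining, for $0 \leq y \leq x \leq c_2^\ast$ I verify the bound case by case: if $y, x$ both lie in $[0, c_1^\ast]$ then $\Phi(y) \leq \Phi(x)$ and the bound is trivial; otherwise $\Phi(y) \leq \Phi(c_1^\ast)$ and $\Phi(x) \geq \Phi(c_2^\ast)$, whence $\Phi(y) - \Phi(x) \leq \Phi(c_1^\ast) - \Phi(c_2^\ast) = \beta$. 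The main obstacle I anticipate is Case (2): since the interior first-order condition in $c_1$ is unavailable, one must carefully exploit the KKT-type inequality at the boundary $c_1 = 0$ (together with the interior first-order condition in $c_2$) to extract the comparison $\cH^{\homega\prime}(0+) \leq \cH^{\homega\prime}(c_2^\ast)$ needed to invoke the convexity of $\cH^{\homega\prime}$.
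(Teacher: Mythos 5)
Your argument is correct, but it takes a genuinely different route from the paper. The paper's proof is a direct three-case comparison that never touches convexity: after noting that $v_{c_1^\ast,c_2^\ast}$ is increasing (so one may assume $x-y>\beta$), it writes the increment via Corollary~\ref{cor_definetti} and then invokes, in one line, the \emph{global} minimality $g(c_1^\ast,c_2^\ast)\leq g(y,x)$ for the competing pair $(y,x)\in\mathrm{dom}(g)$, which is exactly the inequality $\cH^{\homega\prime}(c_2^\ast)\leq \bigl(\cH^{\homega}(x)-\cH^{\homega}(y)\bigr)/(x-y-\beta)$ that closes each case. You instead use only the \emph{first-order} information at the minimiser (the condition $\cH^{\homega\prime}(c_2^\ast)=g(c_1^\ast,c_2^\ast)$, the dichotomy $\cH^{\homega\prime}(c_1^\ast)=\cH^{\homega\prime}(c_2^\ast)$ versus $c_1^\ast=0$, and in the boundary case a KKT-type inequality you must derive yourself), and you compensate by importing the convexity of $\cH^{\homega\prime}$ from Proposition~\ref{prop:log-convex-general} to get unimodality of $v_{c_1^\ast,c_2^\ast}(x)-x$ with maximum at $c_1^\ast$ and drop exactly $\beta$ down to $c_2^\ast$. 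Both are sound; all the facts you need are available (convexity is guaranteed by the standing Assumption~\ref{standing-assumption}, and your three-slope argument propagating $\cH^{\homega\prime}(c_1^\ast)=\cH^{\homega\prime}(c_2^\ast)$, respectively $\cH^{\homega\prime}(0+)\leq\cH^{\homega\prime}(c_2^\ast)$, across $[0,c_2^\ast]$ is valid). What the paper's route buys is brevity and generality — it would survive without the log-convex-tail hypothesis, consistent with the lemma sitting in the verification section rather than in Section~\ref{Sec:Optim:Convex} — while your route buys finer structural information (monotone-up/monotone-down shape of $v-\mathrm{id}$ and the exact identity that its total drop across $[c_1^\ast,c_2^\ast]$ equals $\beta$), which anticipates the uniqueness analysis of Section~\ref{Sec:Optim:Convex}. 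Two cosmetic points: your auxiliary function clashes notationally with $\Phi$, already reserved for the right-inverse of $\psi$; and at $x=0$ you should work with the right-derivative $\cH^{\homega\prime}(0+)$ throughout, as you in fact do.
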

\begin{proof}
	Firstly, we note that $v_{c_1^\ast,c_2^\ast}$ is an increasing function, so throughout this proof we can assume without loss of generality that $x - y > \beta$.\\
	Now, we consider the case when $x \geq y \geq c_2^\ast$, then it follows easily that $v_{c_1^\ast,c_2^\ast}(x) - v_{c_1^\ast,c_2^\ast}(y) = x-y \geq x-y-\beta$. Second, in the case where $c_2^\ast \geq x \geq y$, using Corollary \ref{cor_definetti} we have
	\[
	v_{c_1^\ast,c_2^\ast}(x) - v_{c_1^\ast,c_2^\ast}(y) = \frac{\cH^{\homega}(x) - \cH^{\homega}(y)}{\cH^{\homega\prime}(c_2^\ast)}.
	\]
	Recall that $(c_1^\ast,c_2^\ast) \in C^\ast$, so that we have the equality
	\begin{align*}
		\cH^{\homega\prime}(c_2^\ast) &= \frac{\cH^{\homega}(c_2^\ast) - \cH^{\homega}(c_1^\ast)}{c_2^\ast - c_1^\ast - \beta}\\
		&\leq \frac{\cH^{\homega}(x) - \cH^{\homega}(y)}{x - y - \beta},
	\end{align*}
	where the inequality follows simply from the fact that $(c_1^\ast,c_2^\ast) \in C^\ast$. Plugging this back into $v_{c_1^\ast,c_2^\ast}(x) - v_{c_1^\ast,c_2^\ast}(y)$ gives
	\[
	v_{c_1^\ast,c_2^\ast}(x) - v_{c_1^\ast,c_2^\ast}(y) \geq x - y -\beta.
	\]
	Finally, if $x \geq c_2^\ast \geq y$, then by Corollary \ref{cor_definetti}
	\begin{align*}
		v_{c_1^\ast,c_2^\ast}(x) - v_{c_1^\ast,c_2^\ast}(y) &= x - c_2^\ast + \frac{\cH^{\homega}(c_2^\ast)}{\cH^{\homega\prime}(c_2^\ast)} - \frac{\cH^{\homega}(y)}{\cH^{\homega\prime}(c_2^\ast)} \\
		& \geq x - c_2^\ast + c_2^\ast - y - \beta \\ &= x-y-\beta,
	\end{align*}
	where in the inequality we have used once again that $(c_1^\ast,c_2^\ast) \in C^\ast$.
\end{proof}
We are ready to state the main result of this section.
\begin{theorem}
	Suppose that there exists $(c_1^\ast,c_2^\ast) \in C^\ast$ such that
	\[
	\cH^{\homega\prime}(a) \leq \cH^{\homega\prime}(b), \quad \text{for all } c_2^\ast \leq a \leq b.
	\]
	Then the strategy $(c_1^\ast,c_2^\ast)$ is an optimal strategy for the impulse control problem.
\end{theorem}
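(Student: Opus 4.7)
The plan is to apply the Verification Lemma to the candidate strategy $\hat{\pi} = (c_1^\ast, c_2^\ast)$. Smoothness of $v_{c_1^\ast, c_2^\ast}$ follows from Theorem \ref{thm:solution-integral-eq} applied to the piecewise formula in Corollary \ref{cor_definetti}, together with $C^1$ pasting at $c_2^\ast$ (both one-sided derivatives equal $1$). Condition \eqref{variational:2} is exactly Lemma \ref{lem:HJB:1}, which suffices because admissibility forces post-jump surplus values to be non-negative, and the verification argument only invokes the Lipschitz-type inequality at such values.

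For the variational inequality \eqref{variational:1}, I would split based on whether $x \leq c_2^\ast$ or $x > c_2^\ast$. On $(-\infty, c_2^\ast)$, Corollary \ref{cor_definetti} gives $v_{c_1^\ast, c_2^\ast} = \cH^{\homega}/\cH^{\homega\prime}(c_2^\ast)$, and the defining martingale property of the Omega scale function (Theorem 2.5 in \cite{li-palmowski_2018}) yields $(\Gamma - \homega) \cH^{\homega} = 0$, so the inequality holds with equality.

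On $(c_2^\ast, \infty)$, where $v_{c_1^\ast, c_2^\ast}$ is linear, I would introduce the auxiliary function $w(x) := \cH^{\homega}(x)/\cH^{\homega\prime}(c_2^\ast)$, which extends the left-hand piece globally and satisfies $(\Gamma - \homega) w \equiv 0$. The standing hypothesis that $\cH^{\homega\prime}$ is non-decreasing on $[c_2^\ast, \infty)$, integrated, yields $w \geq v_{c_1^\ast, c_2^\ast}$ on that interval, while the convexity of $\cH^{\homega\prime}$ from Proposition \ref{prop:log-convex-general} together with the hypothesis shows that the difference $g := v_{c_1^\ast, c_2^\ast} - w$ is non-positive, non-increasing, and concave on $[c_2^\ast, \infty)$, and vanishes on $(-\infty, c_2^\ast]$ with $C^1$ matching at $c_2^\ast$. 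Since $(\Gamma - \homega) v_{c_1^\ast, c_2^\ast} = (\Gamma - \homega) g$, the task reduces to proving $\Gamma g(x) \leq \homega(x) g(x)$ for $x > c_2^\ast$.

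I expect this last step to be the main obstacle. The contribution $-\homega(x) g(x)$ has the wrong sign, and in the compensated jump integral the term $g(x-z) - g(x)$ is non-negative while $z g'(x) \ind_{(0,1]}(z)$ is non-positive, so the signs compete. I would split the jump integral at $z = x - c_2^\ast$: on $\{z < x - c_2^\ast\}$ the concavity of $g$ gives the tangent-line bound $g(x-z) - g(x) \leq -z g'(x)$, while on $\{z \geq x - c_2^\ast\}$ the vanishing of $g$ below $c_2^\ast$ reduces the integrand to $-g(x) + z g'(x) \ind_{(0,1]}(z)$. Combining these bounds with the favourable diffusion contribution (non-positive by $g'' \leq 0$) should yield the estimate, following the template of the De Finetti barrier verification in \cite{mata-renaud_2024} which, via Corollary \ref{cor_definetti}, is essentially the same inequality.
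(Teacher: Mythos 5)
Your overall architecture is the same as the paper's: invoke the Verification Lemma, get \eqref{variational:2} from Lemma \ref{lem:HJB:1}, and get \eqref{variational:1} on $(-\infty,c_2^\ast)$ from the harmonicity $(\Gamma-\homega)\cH^{\homega}=0$. The problem is the one step you yourself flag as the main obstacle, and it is a genuine gap rather than a routine computation: the term-by-term sign analysis of $(\Gamma-\homega)g$ for $g=v_{c_1^\ast,c_2^\ast}-\cH^{\homega}/\cH^{\homega\prime}(c_2^\ast)$ cannot close. After your split of the jump integral, the surviving contributions are $-\homega(x)g(x)\geq 0$, the tail term $-g(x)\,\nu(x-c_2^\ast,\infty)\geq 0$, the uncompensated large-jump bound $-zg'(x)\geq 0$ on $z\in(1,x-c_2^\ast)$, and the drift term $\gamma g'(x)$, which is also nonnegative whenever $\gamma<0$. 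The only term with the favourable sign is $\tfrac{\sigma^2}{2}g''(x)$, and it vanishes when $\sigma=0$: in the bounded-variation (e.g.\ Cram\'er--Lundberg) case, where the generator is written with the positive drift $\realsift$ and an uncompensated jump integral, every jump term $g(x-z)-g(x)$ is $\geq 0$ because $g$ is non-increasing, so your estimate bounds $(\Gamma-\homega)v_{c_1^\ast,c_2^\ast}(x)$ by a sum of nonnegative quantities and proves nothing. Concavity, monotonicity and nonpositivity of $g$ are simply not enough to force $\Gamma g\leq\homega g$ pointwise; the inequality uses the harmonicity structure in a more essential way.

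The paper closes this step by importing, verbatim, the argument of Theorem~2 in \cite{loeffen_2008}, whose mechanism is a comparison at the point $x$ itself rather than a sign count on $g$: for $x>c_2^\ast$ one subtracts from $(\Gamma-\homega)v_{c_1^\ast,c_2^\ast}(x)$ the quantity $(\Gamma-\homega)\bigl[\cH^{\homega}(\cdot)/\cH^{\homega\prime}(x)\bigr](x-)=0$, i.e.\ the generator of the single-barrier function with barrier placed at $x$. In that difference the first-derivative terms cancel exactly (both equal $1$ at $x$), the second-derivative term is $-\tfrac{\sigma^2}{2}\cH^{\homega\prime\prime}(x)/\cH^{\homega\prime}(x)\leq 0$ by convexity of $\cH^{\homega\prime}$, each jump increment is controlled because $v_{c_1^\ast,c_2^\ast}'(u)\geq\cH^{\homega\prime}(u)/\cH^{\homega\prime}(x)$ for all $u\leq x$ under the standing hypothesis $\cH^{\homega\prime}(c_2^\ast)\leq\cH^{\homega\prime}(u)\leq\cH^{\homega\prime}(x)$ on $[c_2^\ast,x]$, and the killing term uses $v_{c_1^\ast,c_2^\ast}(x)\geq\cH^{\homega}(x)/\cH^{\homega\prime}(x)$, obtained by integrating $\cH^{\homega\prime}(u)\leq\cH^{\homega\prime}(x)$ over $(c_2^\ast,x)$. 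It is this cancellation that carries the proof; your reduction to the auxiliary function $g$ is a reasonable starting point, but the final inequality needs this comparison (or an equivalent monotonicity argument for $x\mapsto(\Gamma-\homega)v_{c_1^\ast,c_2^\ast}(x)$), not the tangent-line and support bounds you propose.
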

\begin{proof}
	Let $(c_1^\ast,c_2^\ast) \in C^\ast$ that satisfies the conditions of the statement. Firstly, thanks to Lemma \ref{lem:HJB:1} we verify that $(c_1^\ast,c_2^\ast)$ satisfies the variational inequality \eqref{variational:2}.\\
	Regarding the variational inequality \eqref{variational:1}, in the proof of Lemma 4.1 in \cite{mata-renaud_2024} it was shown that
	\[
	(\Gamma - \homega(\cdot)) \cH^{\homega}(x) = 0, \quad x \in \reals\setminus\BRA{a_1,\cdots,a_n,a_{n+1}},
	\]
	then it follows that
	\[
	(\Gamma - \homega(\cdot)) v_{c_1^\ast,c_2^\ast}(x) = 0, \quad x \in (-\infty,c_2^\ast)\setminus\BRA{a_1,\cdots,a_n,a_{n+1}}.
	\]
	On the other hand, using that $\cH^{\homega\prime}(c_2^\ast) \leq \cH^{\homega\prime}(a) \leq \cH^{\homega\prime}(b)$, then we can proceed verbatim as in the proof of Theorem 2 in \cite{loeffen_2008} to deduce
	\[
	(\Gamma - \homega(\cdot)) v_{c_1^\ast,c_2^\ast}(x) \leq 0, \quad x \in (c_2^\ast,\infty).
	\]
	We conclude that $(c_1^\ast,c_2^\ast)$ is an optimal strategy.
\end{proof}
\section{Optimal strategies when the tail of the L\'evy measure is log-convex}\label{Sec:Optim:Convex}
The results presented in the preceding sections hold for a general class of SNLP's, however, under our standing assumption that the tail of L\'evy measure is log-convex, we are able to further characterise the optimal barrier strategy. In this section, we will prove that $C^\ast$ consists of one element, and we will provide conditions to determine whether $c_1^\ast = 0$ or $c_1^\ast > 0$, as well as the characterisation of $c^\ast_2$.\\
Under our standing Assumption \ref{standing-assumption}, we have that $W_q$ is $C^1$ on $(0,\infty)$ and moreover, Proposition 2.1 in \cite{mata-renaud_2024} gives that $x \mapsto \cH^{\homega\prime}(x)$ is log-convex on $(0,\infty)$, hence it is also convex. In addition, the log-convexity of $\cH^{\homega\prime}$ implies that there exists $a^\ast \in [0,\infty)$ such that
\[
\cH^{\homega\prime}(a^\ast) \leq \cH^{\homega\prime}(x), \quad \text{for all } x\geq 0.
\]
It also follows that $\cH^{\homega\prime}$ is strictly decreasing on $(0,a^\ast)$ and strictly increasing on $(a^\ast,\infty)$. Throughout this section we will use the following function $\zeta : (0,a^\ast) \to (a^\ast,\infty)$, which is implicitly defined as the one that satisfies $\cH^{\homega\prime}(x) = \cH^{\homega\prime}(\zeta(x))$ for all $x \in (0,a^\ast)$.
\begin{remark}[Properties of $\zeta$]
	By its definition, $\zeta$ is strictly decreasing. In addition, using that $\cH^{\homega\prime}$ is convex on $(0,\infty)$, hence left- and right-differentiable, we obtain for $a \in (0,a^\ast)$
	\begin{align*}
		\lim_{x \downarrow a} \frac{\zeta(x)-\zeta(a)}{x-a} &= \lim_{x\downarrow a}\frac{(\zeta(x)-\zeta(a))(\cH^{\homega\prime}(x) - \cH^{\homega\prime}(a))}{(x-a)(\cH^{\homega\prime}(\zeta(x)) - \cH^{\homega\prime}(\zeta(a)))}\\
		&= \PAR{\lim_{y \uparrow \zeta(a)}\frac{y - \zeta(a)}{\cH^{\homega\prime}(y) - \cH^{\homega\prime}(\zeta(a))} }\PAR{\lim_{x\downarrow a}\frac{ \cH^{\homega\prime}(x) - \cH^{\homega\prime}(a) }{x-a}}\\
		&= \frac{\cH^{\homega\prime\prime}_+(a)}{\cH^{\homega\prime\prime}_-(\zeta(a))},
	\end{align*}
	hence $\zeta$ is right-differentiable. By a similar computation, we also obtain that $\zeta$ is left-differentiable.\\
	Due to convexity of $\cH^{\homega\prime}$ and the definition of $a^\ast$, we have that $\cH^{\homega\prime\prime}_+(x)<0$ for all $x \in (0,a^\ast)$, and that $\cH^{\homega\prime\prime}_-(x)>0$ for $x \in (a^\ast,\infty)$. It then follows that $\zeta'_+(x) < 0$ for all $x \in (0,a^\ast)$.\\
\end{remark}
We state our main result.
\begin{theorem}\label{thm.unique}
	If the L\'evy measure has a log-convex tail, then there exists a unique $(c^\ast_1,c^\ast_2)$ strategy that is optimal for the impulse control problem. Moreover, $c_1^\ast = 0$ if and only if $\beta \geq \beta_{\max}$ or $a^\ast = 0$, where $\beta_{\max}$ is given by 
	\begin{equation}\label{eq:betamax}
		\beta_{\max}:= \zeta(0) - \frac{\cH^{\homega}(\zeta(0)) - \cH^{\homega}(0)}{\cH^{\homega\prime}(0+)}.
	\end{equation}
\end{theorem}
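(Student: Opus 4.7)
I will exploit the structure of $C^\ast$ from the preceding proposition. Under log-convexity, $\cH^{\homega\prime}$ is convex with a unique minimiser $a^\ast \in [0, \infty)$, so the alternative $\cH^{\homega\prime}(c_1^\ast) = \cH^{\homega\prime}(c_2^\ast)$ forces $c_1^\ast \in (0, a^\ast)$ and $c_2^\ast = \zeta(c_1^\ast)$. I will therefore reduce the search for optimal strategies to two one-dimensional problems---one along the curve $c_2 = \zeta(c_1)$ (the \emph{interior regime}) and one on the boundary $c_1 = 0$ (the \emph{boundary regime})---establish uniqueness in each, and identify the threshold in $\beta$ at which the interior solution ceases to exist.

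\textbf{Interior regime.} Assuming $a^\ast > 0$, substituting $c_2 = \zeta(c_1)$ into the first-order equation
\[
\cH^{\homega\prime}(c_2) = \frac{\cH^{\homega}(c_2) - \cH^{\homega}(c_1)}{c_2 - c_1 - \beta}
\]
yields $\beta = G(c_1)$, where
\[
G(y) := \zeta(y) - y - \frac{\cH^{\homega}(\zeta(y)) - \cH^{\homega}(y)}{\cH^{\homega\prime}(y)}, \quad y \in (0, a^\ast).
\]
Differentiating, the identity $\cH^{\homega\prime}(\zeta(y)) = \cH^{\homega\prime}(y)$ causes the terms carrying $\zeta'(y)$ to cancel, leaving
\[
G'(y) = \frac{(\cH^{\homega}(\zeta(y)) - \cH^{\homega}(y))\, \cH^{\homega\prime\prime}(y)}{(\cH^{\homega\prime}(y))^2}.
\]
Since $\cH^{\homega\prime}$ is strictly decreasing on $(0, a^\ast)$ while the bracketed factor is strictly positive, $G$ is strictly decreasing on $(0, a^\ast)$. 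Passing to the limits and using continuity of $\zeta$ and $\cH^{\homega}$ gives $G(0+) = \beta_{\max}$ and $G(a^\ast-) = 0$; hence $\beta = G(c_1)$ admits a unique solution $c_1^\ast \in (0, a^\ast)$ exactly when $\beta \in (0, \beta_{\max})$.

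\textbf{Boundary regime and comparison.} On $c_1 = 0$, the first-order condition in $c_2$ becomes $\psi(c_2) = 0$ with $\psi(c_2) := \cH^{\homega\prime}(c_2)(c_2 - \beta) - (\cH^{\homega}(c_2) - \cH^{\homega}(0))$. Since $\psi'(c_2) = \cH^{\homega\prime\prime}(c_2)(c_2 - \beta)$ and $\cH^{\homega\prime}$ is convex with minimum at $a^\ast$, $\psi$ decreases then increases; together with $\psi(\beta) < 0$ and $\psi(c_2) \to \infty$, this yields a unique root $c_2^{bd} \in (a^\ast \vee \beta, \infty)$. A short computation using $\psi(c_2^{bd}) = 0$ gives
\[
\frac{\partial g}{\partial c_1}(0, c_2^{bd}) = \frac{\cH^{\homega\prime}(c_2^{bd}) - \cH^{\homega\prime}(0+)}{c_2^{bd} - \beta},
\]
which is negative iff $c_2^{bd} < \zeta(0)$. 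By strict monotonicity on $(a^\ast, \infty)$ of $\tilde G(c_2) := c_2 - (\cH^{\homega}(c_2) - \cH^{\homega}(0))/\cH^{\homega\prime}(c_2)$, which satisfies $\tilde G(\zeta(0)) = \beta_{\max}$, this is equivalent to $\beta < \beta_{\max}$. Therefore, when $a^\ast > 0$ and $\beta < \beta_{\max}$ the boundary candidate is strictly improved by raising $c_1$, so the unique minimiser of $g$ is the interior candidate; otherwise (either $a^\ast = 0$, which precludes any interior candidate, or $\beta \geq \beta_{\max}$, in which case $G(c_1) = \beta$ has no solution in $(0, a^\ast)$) the unique minimiser is the boundary candidate.

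\textbf{Conclusion and main obstacle.} Collecting the cases yields uniqueness of $(c_1^\ast, c_2^\ast)$ and the equivalence $c_1^\ast = 0$ if and only if $a^\ast = 0$ or $\beta \geq \beta_{\max}$. In every case $c_2^\ast \geq a^\ast$, and log-convexity of $\cH^{\homega\prime}$ ensures $\cH^{\homega\prime}$ is non-decreasing on $[c_2^\ast, \infty)$; thus the hypothesis of the preceding verification theorem is satisfied, and the $(c_1^\ast, c_2^\ast)$ strategy is optimal. The principal technical hurdle will be justifying the derivative formula for $G$ in the presence of possible jumps of $\cH^{\homega\prime\prime}$ at the partition points of $\omega$ together with the merely one-sided differentiability of $\zeta$; this is to be handled either by working piecewise on each smoothness component of $\cH^{\homega}$ or, alternatively, by replacing the pointwise computation with a ratio-comparison argument that invokes log-convexity of $\cH^{\homega\prime}$ directly without referring to second derivatives.
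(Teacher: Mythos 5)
Your proposal is correct and rests on the same decomposition as the paper: reduce to the boundary regime $c_1=0$ and the interior regime along the curve $c_2=\zeta(c_1)$, and detect the threshold $\beta_{\max}$ through the sign of $\partial g/\partial c_1$ at the boundary critical point (your formula $\frac{\partial g}{\partial c_1}(0,c_2^{bd})=\frac{\cH^{\homega\prime}(c_2^{bd})-\cH^{\homega\prime}(0+)}{c_2^{bd}-\beta}$ is exactly the quantity the paper computes in case (1) of Lemma \ref{lemma.condition}). The differences are in mechanism rather than substance. First, the paper proves uniqueness of the minimiser up front and directly (Lemma \ref{lemma.unique}), by a contradiction argument using strict monotonicity of $\cH^{\homega\prime}$ on either side of $a^\ast$ plus a mean-value step for the mixed case $c_1=0$ versus $\hat c_1>0$; you instead obtain uniqueness as a by-product of the uniqueness of critical points in each regime together with the comparison step, which is equally valid but makes uniqueness contingent on the full characterisation. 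Second, for the interior regime the paper analyses $g_1(c_1)=\frac{\cH^{\homega}(\zeta(c_1))-\cH^{\homega}(c_1)}{\zeta(c_1)-c_1-\beta}$ through the sign of $g_1-\cH^{\homega\prime}$ at the endpoints, whereas you invert the first-order condition into $\beta=G(c_1)$ with $G$ strictly decreasing from $G(0+)=\beta_{\max}$ to $G(a^\ast-)=0$; these are equivalent (note $g_1(x)<\cH^{\homega\prime}(x)$ if and only if $G(x)>\beta$), but your parametrisation makes the origin of $\beta_{\max}$ and the existence/non-existence dichotomy in $\beta$ more transparent, at the price of the regularity issue you flag: since $\cH^{\homega}$ need not be $C^2$ (Theorem \ref{thm:solution-integral-eq}), the identity for $G'$ should be read with one-sided derivatives, exactly as the paper does for $g_{1+}'$ and $\zeta_+'$, or replaced by the monotone-integrand argument $G(y_2)-G(y_1)=\int_{y_1}^{y_2}\frac{(\cH^{\homega}(\zeta)-\cH^{\homega})\,\diff\cH^{\homega\prime}}{(\cH^{\homega\prime})^2}<0$ using absolute continuity of the convex function $\cH^{\homega\prime}$; this is routine and not a gap. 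Two small points: your auxiliary function on the boundary is denoted $\psi$, which collides with the Laplace exponent in \eqref{eq.Laplace} and should be renamed; and your closing observation that $c_2^\ast\geq a^\ast$ guarantees the monotonicity hypothesis of the verification theorem is a welcome addition, since the paper leaves that final appeal implicit.
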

The proof of this Theorem is rather long, so we will break it down into a series of lemmas.
\begin{lemma}\label{lemma.unique} The set $C^\ast$ consists of only one element, hence there exists a unique impulse control strategy $(c_1^\ast,c_2^\ast)$ that is optimal.
\end{lemma}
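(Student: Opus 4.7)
The plan is to combine the characterization of minimizers from the preceding proposition---any $(c_1^\ast,c_2^\ast)\in C^\ast$ satisfies $\cH^{\homega\prime}(c_2^\ast) = m := \inf_{\text{dom}(g)} g$, and either (A) $c_1^\ast > 0$ with $\cH^{\homega\prime}(c_1^\ast)=\cH^{\homega\prime}(c_2^\ast)$, or (B) $c_1^\ast = 0$---with the sharp monotonicity of $\cH^{\homega\prime}$ afforded by Proposition~\ref{prop:log-convex-general}: under a log-convex tail, $\cH^{\homega\prime}$ is strictly decreasing on $(0,a^\ast)$ and strictly increasing on $(a^\ast,\infty)$, so the level set $\{\cH^{\homega\prime}=m\}$ is at most a two-point set bracketing $a^\ast$. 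The strategy is to show at most one minimizer in each of Cases A and B and then exclude their coexistence.

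For Case A, if two interior minimizers exist, the relation $\cH^{\homega\prime}(c_1)=\cH^{\homega\prime}(c_2)$ together with $c_1<c_2$ forces $c_1\in(0,a^\ast)$ and $c_2\in(a^\ast,\infty)$; matching the common value $m$ on each branch via strict monotonicity then identifies both coordinates. For Case B, given two minimizers $(0,c_2^{(i)})$, I would first argue---by an MVT-type estimate in the spirit of the exclusion of the boundary line $c_2=c_1+\beta$ in the preceding proposition---that the critical-point relation $(c_2-\beta)\cH^{\homega\prime}(c_2)=\cH^{\homega}(c_2)-\cH^{\homega}(0)$ is incompatible with $c_2\leq a^\ast$: for $c_2\in(\beta,a^\ast]$, strict decrease of $\cH^{\homega\prime}$ on $(0,a^\ast)$ gives $\cH^{\homega}(c_2)-\cH^{\homega}(0)=\int_0^{c_2}\cH^{\homega\prime}(s)\,\diff s>c_2\,\cH^{\homega\prime}(c_2)>(c_2-\beta)\cH^{\homega\prime}(c_2)$. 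Both $c_2^{(i)}$ therefore lie in $(a^\ast,\infty)$, and strict monotonicity there yields $c_2^{(1)}=c_2^{(2)}$.

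To rule out coexistence of Cases A and B, the two preceding arguments force $c_2^{(1)}=c_2^{(2)}=:c_2^\ast\in(a^\ast,\infty)$ from $\cH^{\homega\prime}(c_2^{(i)})=m$. Subtracting the two critical-point identities $m(c_2^\ast-c_1^{(1)}-\beta)=\cH^{\homega}(c_2^\ast)-\cH^{\homega}(c_1^{(1)})$ and $m(c_2^\ast-\beta)=\cH^{\homega}(c_2^\ast)-\cH^{\homega}(0)$ produces $\cH^{\homega}(c_1^{(1)})-\cH^{\homega}(0)=m\,c_1^{(1)}$, and MVT yields $\xi\in(0,c_1^{(1)})\subset(0,a^\ast)$ with $\cH^{\homega\prime}(\xi)=m=\cH^{\homega\prime}(c_1^{(1)})$, contradicting strict decrease of $\cH^{\homega\prime}$ on $(0,a^\ast)$. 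Combining these three steps with the nonemptiness of $C^\ast$ from the preceding proposition gives $|C^\ast|=1$. The main delicate point is the MVT-type exclusion of $c_2\leq a^\ast$ in Case B; once it is in place, everything else reduces to strict-monotonicity bookkeeping on the two branches separated by $a^\ast$.
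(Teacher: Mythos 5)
Your proof is correct and follows essentially the same route as the paper's: split into the interior case ($c_1^\ast>0$, where $\cH^{\homega\prime}(c_1^\ast)=\cH^{\homega\prime}(c_2^\ast)$ places the two coordinates on opposite sides of $a^\ast$) and the boundary case ($c_1^\ast=0$), use strict monotonicity of $\cH^{\homega\prime}$ on $(0,a^\ast)$ and $(a^\ast,\infty)$ to pin down each coordinate, and rule out coexistence of the two cases by subtracting the critical-point identities and deriving a mean-value-theorem contradiction with the strict decrease of $\cH^{\homega\prime}$ on $(0,a^\ast)$. The one substantive difference is that you actually justify $c_2^\ast>a^\ast$ in the boundary case via the estimate $\cH^{\homega}(c_2)-\cH^{\homega}(0)=\int_0^{c_2}\cH^{\homega\prime}(s)\,\diff s>(c_2-\beta)\cH^{\homega\prime}(c_2)$ for $c_2\leq a^\ast$, whereas the paper simply asserts $c_2,\hat c_2>a^\ast$ without argument, so your version closes a small gap in the published proof.
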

\begin{proof}
	Suppose there exist $(c_1,c_2), (\hat{c}_1,\hat{c}_2) \in C^\ast$. Since they are minimisers of $g$, we have that $\cH^{\homega\prime}(c_2)= \cH^{\homega\prime}(\hat{c}_2)$. However, since both $c_2,\hat{c_2} > a^\ast$ and $\cH^{\homega\prime}$ is strictly increasing on $(a^\ast,\infty)$, then we must have $c_2 = \hat{c}_2$.\\
	By a similar reasoning, if both $c_1$ and $\hat{c}_1$ are strictly positive, and since $c_1,\hat{c}_1 < a^\ast$ and $\cH^{\homega\prime}$ is strictly decreasing on $(0,a^\ast)$, then we must have $c_1 = \hat{c}_1$.\\
	Finally, suppose that $c_1 = 0$ and $\hat{c}_1 > 0$. On the one hand we have
	\[
	\cH^{\homega\prime}(c_1) = \frac{\cH^{\homega}(c_2)-\cH^{\homega}(c_1)}{c_2-c_1-\beta}.
	\]
	On the other hand, we have
	\[
	\cH^{\homega\prime}(0) = \frac{\cH^{\homega}(c_2)-\cH^{\homega}(0)}{c_2-\beta}.
	\]
	Since they are both minimisers of $g$, we also have $ \cH^{\homega\prime}(c_1) = \cH^{\homega\prime}(0) $, which after some algebraic manipulations gives
	\begin{align*}
	\cH^{\homega\prime}(c_1) &= \frac{\cH^{\homega\prime}(c_1)(c_2-\beta) + \cH^{\homega}(0) - \cH^{\homega}(c_1)}{c_2-c_1-\beta}\\
	&= \frac{\cH^{\homega\prime}(c_1)(c_2-\beta) - \cH^{\homega\prime}(\xi)c_1}{c_2-c_1-\beta},
	\end{align*}
	where in the second equality we have applied the mean value theorem, hence $\xi \in (0,c_1)$. However, since $\cH^{\homega\prime}$ is strictly decreasing on $(0,c_1)$, we obtain
	\begin{align*}
	\cH^{\homega\prime}(c_1) &= \frac{\cH^{\homega\prime}(c_1)(c_2-\beta) - \cH^{\homega\prime}(\xi)c_1}{c_2-c_1-\beta}\\
	&< \cH^{\homega\prime}(c_1),
	\end{align*}
	which is a contradiction. It follows that $c_1$ has to be equal to $\hat{c}_1$.
\end{proof}
In the ext result, we provide a way to compute $(c_1^\ast,c_2^\ast)$. To this end, we will introduce some auxiliary functions that will allow us to reduce the 2-dimensional minimisation into two auxiliary 1-dimensional minimisation problems. Let
\[
c_{1\max}:= \inf\BRA{c_1\in (0,a^\ast): \zeta(c_1)-c_1 \leq \beta},
\]
where we set $c_{1\max} = 0$ when $\lim_{x\downarrow 0}\zeta(x)\leq \beta$. Define the functions $g_1 : (0,c_{1\max})\to (0,\infty)$ and $g_0:(\beta,\infty)\to (0,\infty)$ as
\begin{align}
	g_1(c_1) &= \frac{\cH^{\homega}(\zeta(c_1)) - \cH^{\homega}(c_1)}{\zeta(c_1)-c_1-\beta}, \label{eq.g1}\\
	g_0(c_2) &= \frac{\cH^{\homega}(c_2) - \cH^{\homega}(0)}{c_2-\beta}.\label{eq.c2}
\end{align}
\begin{lemma}\label{lemma.g0-min} The function $g_0$ has a unique minimiser.
\end{lemma}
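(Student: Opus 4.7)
The plan is to reduce uniqueness of the minimiser of $g_0$ to a sign analysis of the numerator of $g_0'$. A direct differentiation gives
\[
g_0'(c_2)(c_2-\beta)^2 = h(c_2) := \cH^{\homega\prime}(c_2)(c_2-\beta) - \bigl(\cH^{\homega}(c_2) - \cH^{\homega}(0)\bigr),
\]
so the critical points of $g_0$ are exactly the zeros of $h$, and $g_0'$ has the same sign as $h$ on $(\beta,\infty)$. The goal becomes to show that $h$ has exactly one zero in $(\beta,\infty)$ and that at this zero $g_0'$ transitions from negative to positive.

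First I would establish the existence of a minimiser. Since $\cH^{\homega}$ is strictly increasing, $g_0(\beta+) = +\infty$; and using that $\cH^{\homega\prime}(c_2) \to +\infty$ together with $\cH^{\homega}(c_2) - \cH^{\homega}(0) = \int_0^{c_2}\cH^{\homega\prime}(s)\,ds$, a L'Hopital-type argument yields $g_0(c_2) \to +\infty$ as $c_2 \to \infty$. Continuity of $g_0$ then forces a minimiser to exist on $(\beta,\infty)$, producing at least one zero of $h$. Note also that $h(\beta+) = -(\cH^{\homega}(\beta) - \cH^{\homega}(0)) < 0$.

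For uniqueness, I would exploit the convexity of $\cH^{\homega\prime}$ on $(0,\infty)$ granted by Proposition \ref{prop:log-convex-general}. Convexity ensures that $\cH^{\homega\prime}$ has one-sided second derivatives everywhere, and from the remark on the properties of $\zeta$ we have $\cH^{\homega\prime\prime}_+(x) < 0$ on $(0,a^\ast)$ and $\cH^{\homega\prime\prime}_-(x) > 0$ on $(a^\ast,\infty)$. Since
\[
h'_\pm(c_2) = \cH^{\homega\prime\prime}_\pm(c_2)(c_2-\beta),
\]
the function $h$ is strictly decreasing on $(\beta,\infty) \cap (0,a^\ast)$ and strictly increasing on $(\beta,\infty) \cap (a^\ast,\infty)$. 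Consequently, if $a^\ast > \beta$ then $h$ remains strictly negative on $(\beta, a^\ast]$, and on the complementary interval $(\max(a^\ast,\beta), \infty)$ it is strictly monotone and thus admits at most one zero. Combined with existence, this gives exactly one zero, at which $g_0'$ switches sign from negative to positive, so $g_0$ has a unique minimiser.

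The main obstacle is the limited regularity of $\cH^{\homega\prime}$: it is convex but not a priori classically twice differentiable, forcing the use of one-sided second derivatives throughout. This is harmless because convexity guarantees that these one-sided derivatives exist on all of $(0,\infty)$ and that the monotonicity of $h$ can be read off their signs. A secondary technicality is the asymptotic $g_0(c_2) \to +\infty$ at infinity, which I would justify directly from the integral representation of $\cH^{\homega\prime}$ recalled in the proof of the preceding proposition, ensuring $\cH^{\homega\prime}(c_2) \to +\infty$.
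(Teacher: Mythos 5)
Your proof is correct and follows essentially the same route as the paper: both reduce the problem to the sign of $g_0'$, which is governed by the comparison between $\cH^{\homega\prime}$ and the difference quotient, use the blow-up of $g_0$ at $\beta$ and at infinity for existence, and invoke the convexity of $\cH^{\homega\prime}$ (with minimum at $a^\ast$) for uniqueness. Your repackaging via $h(c_2)=\cH^{\homega\prime}(c_2)(c_2-\beta)-\bigl(\cH^{\homega}(c_2)-\cH^{\homega}(0)\bigr)$ and the identity $h'_\pm=\cH^{\homega\prime\prime}_\pm\cdot(c_2-\beta)$ is a nice touch, since it makes the ``at most one critical point'' step fully explicit where the paper only asserts it from the endpoint behaviour.
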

\begin{proof}
		By differentiating $g_0$ we obtain:
	\begin{align*}
		g'_{0}(x)&=\frac{1}{x-\beta}(\cH^{\homega\prime}(x) - g_0(x)).
	\end{align*}
	From these expressions we deduce the following:
	\begin{enumerate}
		\item $g_0$ is strictly decreasing if and only if $ g_0 < \cH^{\homega\prime}$;
		\item $g_0$ is strictly increasing if and only if $ g_0 > \cH^{\homega\prime}$;
		\item $g_0$ has a critical point at $x$ if and only if $ g_0(x) = \cH^{\homega\prime}(x)$.
	\end{enumerate}
	In addition, we show that $g_0$ has a unique minimiser. On the one hand, we have that $\lim_{x\downarrow \beta}g_0(x) = + \infty$ whereas $\cH^{\omega\prime}(\beta)< \infty$, hence it follows that there exists an open vicinity around $\beta$ such that $g_0 > \cH^{\homega\prime}$. On the other hand, for $\varepsilon \in (0,1)$, and $x$ large enough we write 
	\[
	g_0(x) = \PAR{\frac{\cH^{\homega}(x) - \cH^{\homega}(0)}{(1-\varepsilon)x}}\PAR{\frac{(1-\varepsilon)x}{x-\beta}}.
	\]
	Note that if $x$ is large enough, then $\frac{(1-\varepsilon)x}{x-\beta}\leq 1$. In addition, the mean value theorem implies that there exists some $\xi \in [0,x]$ such that $\cH^{\homega\prime}(\xi) = \frac{\cH^{\homega}(x) - \cH^{\homega}(0)}{x}$. Moreover, since $\cH^{\homega\prime}$ is increasing on $(a^\ast,\infty)$ then for $x \gg a^\ast$ we have
	\[
	g_0(x) < \frac{\cH^{\homega\prime}(x)}{1-\varepsilon},
	\]
	where it follows that $g_0(x) < \cH^{\homega\prime}(x)$ for $x$ sufficiently large. This analysis implies that there exists a unique $\hat{c} \in (\beta \vee a^\ast,\infty)$ such that $g_0$ is decreasing on $(\beta,\hat{c})$, increasing on $(\hat{c},\infty)$, and $g_0(\hat{c}) = \cH^{\homega\prime}(\hat{c})$.
\end{proof}
Finally, we derive a condition to determine whether $c_1^\ast > 0 $ or $c_1^\ast=0$.
\begin{lemma}\label{lemma.condition}
	Recall the parameter $\beta_{\max}$ defined in \eqref{eq:betamax}. Consider the following cases:
	\begin{enumerate}
		\item $a^\ast > 0$ and $\beta < \beta_{\max}$;
		\item $a^\ast > 0$ and $\beta \geq \beta_{\max}$;
		\item $a^\ast = 0$.
	\end{enumerate}
	In case (1) we have that $c_1^\ast > 0$; whereas in cases (2) and (3) we have that $c_1^\ast=0$.
\end{lemma}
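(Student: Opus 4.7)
The plan is to reduce the three cases to a sign analysis organised around the unique minimiser $\hat c$ of $g_0$ furnished by Lemma~\ref{lemma.g0-min}. Case~(3) is immediate: if $a^\ast=0$ then $\cH^{\homega\prime}$ is strictly increasing on $(0,\infty)$, which rules out the first-order identity $\cH^{\homega\prime}(c_1^\ast)=\cH^{\homega\prime}(c_2^\ast)$ from the proposition in Section~\ref{Sec:Barrier} at any $0<c_1^\ast<c_2^\ast$, and hence forces $c_1^\ast=0$ via the alternative branch of that proposition.

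For Cases~(1)--(2) I would first translate the threshold $\beta_{\max}$ into a comparison of $\hat c$ with $\zeta(0)$. A direct manipulation of \eqref{eq:betamax} together with $\cH^{\homega\prime}(\zeta(0))=\cH^{\homega\prime}(0+)$ shows that $\beta<\beta_{\max}\iff g_0(\zeta(0))<\cH^{\homega\prime}(\zeta(0))$ and $\beta\geq\beta_{\max}\iff g_0(\zeta(0))\geq\cH^{\homega\prime}(\zeta(0))$; combining with the sign table for $g_0^\prime$ derived in the proof of Lemma~\ref{lemma.g0-min} yields $\hat c<\zeta(0)$ in Case~(1) and $\hat c\geq\zeta(0)$ in Case~(2). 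Since both $\hat c$ and $\zeta(0)$ lie in the strictly-increasing region $(a^\ast,\infty)$ of $\cH^{\homega\prime}$, these comparisons transfer verbatim to the analogous ones between $\cH^{\homega\prime}(\hat c)$ and $\cH^{\homega\prime}(0+)$.

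In Case~(1), I would rule out $c_1^\ast=0$ by a direct perturbation at the candidate $(0,\hat c)$. A short computation gives $\partial_{c_1}g(c_1,c_2)=(g(c_1,c_2)-\cH^{\homega\prime}(c_1))/(c_2-c_1-\beta)$, so evaluating at $(0,\hat c)$ and using $g_0(\hat c)=\cH^{\homega\prime}(\hat c)$ from Lemma~\ref{lemma.g0-min} yields $\partial_{c_1}g(0,\hat c)=(\cH^{\homega\prime}(\hat c)-\cH^{\homega\prime}(0+))/(\hat c-\beta)<0$ by the previous paragraph. Hence $g(c_1,\hat c)<g(0,\hat c)=g_0(\hat c)$ for sufficiently small $c_1>0$ (which also satisfy $c_1<\hat c-\beta$, keeping $(c_1,\hat c)\in\mathrm{dom}(g)$). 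Since $C^\ast$ is non-empty by the proposition in Section~\ref{Sec:Barrier}, this excludes $(0,\hat c)$ as an optimum and forces $c_1^\ast>0$.

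In Case~(2), the plan is to show that $g_1$ has no critical point on $(0,c_{1\max})$; combined with the proposition in Section~\ref{Sec:Barrier}, this will rule out any interior optimum and force $c_1^\ast=0$. Differentiating $g_1$ and using $\cH^{\homega\prime}(\zeta(c_1))=\cH^{\homega\prime}(c_1)$ yields $g_1^\prime(c_1)=-\frac{\zeta^\prime(c_1)-1}{\zeta(c_1)-c_1-\beta}\,\Phi(c_1)$ with $\Phi:=g_1-\cH^{\homega\prime}$, so $\operatorname{sgn}(g_1^\prime)=\operatorname{sgn}(\Phi)$ thanks to $\zeta^\prime<0$ from the Remark following the definition of $\zeta$. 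The previous paragraph gives $\Phi(0+)\geq 0$. A smallest zero $c_1^\circ\in(0,c_{1\max})$ of $\Phi$ would force $g_1^\prime(c_1^\circ)=0$ and hence $\Phi^\prime_-(c_1^\circ)=-\cH^{\homega\prime\prime}_-(c_1^\circ)>0$, since $c_1^\circ<c_{1\max}<a^\ast$ sits in the strictly decreasing region of $\cH^{\homega\prime}$; but this contradicts $\Phi^\prime_-(c_1^\circ)\leq 0$, which is implied by $\Phi>0$ on $(0,c_1^\circ)$ together with $\Phi(c_1^\circ)=0$. The main technical subtlety to bookkeep is the boundary value $\beta=\beta_{\max}$, where $\Phi(0+)=0$: there one first invokes the right-derivative $\Phi^\prime_+(0)=-\cH^{\homega\prime\prime}_+(0)>0$ to push $\Phi$ strictly positive on a right neighbourhood of $0$, and only then applies the contradiction argument on the remainder of $(0,c_{1\max})$.
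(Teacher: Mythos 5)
Your proposal is correct, and in the two cases with $a^\ast>0$ it follows a genuinely different route from the paper. The paper's proof is constructive: for $\beta<\beta_{\max}$ it shows $g_1$ has a unique critical point $\bar c$ and then identifies the minimiser as $(\bar c,\zeta(\bar c))$ by comparing $g_1(\bar c)$ with $g_0(\hat c)$ in two sub-cases (according to whether $g_0(\hat c)\geq \cH^{\homega\prime}(0+)$ or not), while for $\beta\geq\beta_{\max}$ it invokes a mean-value bound to show $g_1>\cH^{\homega\prime}$ on $(0,c_{1\max})$. You instead decide only the sign of $c_1^\ast$. Your Case (1) is the same boundary-derivative computation $\partial_{c_1}g(0,\hat c)=(g_0(\hat c)-\cH^{\homega\prime}(0+))/(\hat c-\beta)$ that the paper uses in its second sub-case, but you add the clean equivalence $\beta<\beta_{\max}\iff g_0(\zeta(0))<\cH^{\homega\prime}(\zeta(0))\iff \hat c<\zeta(0)$, which shows that $g_0(\hat c)<\cH^{\homega\prime}(0+)$ always holds under the hypothesis, so the paper's first sub-case need never be invoked. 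Your Case (2) replaces the paper's mean-value argument by a first-zero contradiction for $\Phi=g_1-\cH^{\homega\prime}$, which has the merit of using the hypothesis $\beta\geq\beta_{\max}$ visibly (through $\Phi(0+)\geq 0$) and of tracking the one-sided derivatives of $\zeta$ and $\cH^{\homega\prime}$ explicitly; the paper's chain of inequalities there is terser and does not display where $\beta\geq\beta_{\max}$ enters after the first line. What your version gives up is the explicit identification of the optimum as $(\bar c,\zeta(\bar c))$ or $(0,\hat c)$, which the paper exploits in the numerical scheme of Section~\ref{Sec:Numerical}; what it buys is a sharper separation between locating the minimiser and deciding whether $c_1^\ast$ vanishes. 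Two small points to tidy: in Case (2) the evaluation of $g_0(\zeta(0))$ presupposes $\zeta(0)>\beta$ (if $\zeta(0)\leq\beta$ then $c_{1\max}=0$ and the conclusion is immediate), and in Case (1) the passage from ``$(0,\hat c)$ is not optimal'' to ``$c_1^\ast>0$'' should explicitly cite the uniqueness of the minimiser of $g_0$ from Lemma~\ref{lemma.g0-min}, since that is what makes $(0,\hat c)$ the only candidate on the boundary $\BRA{c_1=0}$.
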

\begin{proof}
	In case (1), we will show that $c_1^\ast > 0$. First, by differentiating $g_1$ we obtain
		\begin{align*}
		g'_{1+}(x)&=\frac{\zeta'_+(x)-1}{\zeta(x)-x-\beta}(\cH^{\homega\prime}(x) - g_1(x)),
		\end{align*}
		then we deduce the following:
			\begin{enumerate}
					\item $g_1$ is strictly decreasing if and only if $ g_1 < \cH^{\homega\prime}$;
					\item $g_1$ is strictly increasing if and only if $ g_1 > \cH^{\homega\prime}$;
					\item $g_1$ has a critical point at $x$ if and only if $ g_1(x) = \cH^{\homega\prime}(x)$.
				\end{enumerate}
	Given this behaviour, we now show that $g_1$ has a unique minimiser. Indeed, on the one hand we have
	\begin{align*}
		g_1(0) &= \frac{\cH^{\homega}(\zeta(0)) - \cH^{\homega}(0)}{\zeta(0)-\beta}\\
		& < \frac{\cH^{\homega}(\zeta(0)) - \cH^{\homega}(0)}{\zeta(0)-\beta_{\max}}\\
		&= \cH^{\homega\prime}(0+),
	\end{align*}
	where in the last equality we have used the definition of $\beta_{\max}$. On the other hand, we have
	\[
	\lim_{x \uparrow c_{1\max}}g_1(x) = +\infty > \cH^{\homega\prime}(c_{1\max}).
	\]
	This analysis, together with the fact that $\cH^{\homega\prime}$ is strictly decreasing on $(0,a^\ast)$ implies that there exists a unique $\bar{c} \in (0,c_{1\max})$ such that $g_1$ is decreasing on $(0,\bar{c})$, increasing on $(\bar{c},c_{1\max})$, and that $g_1(\bar{c})= \cH^{\homega\prime}(\bar{c})$.\\
	Now we turn to prove that $(c_1^\ast,c_2^\ast) = (\bar{c},\zeta(\bar{c}))$. By the definition of $\bar{c}$ and the fact that $\cH^{\homega\prime}$ is decreasing on $(0,a^\ast)$ we have $g_1(\bar{c})< \cH^{\homega\prime}(0+)$. Recall the unique minimiser of $g_0$, denoted by $\hat{c}$, which we obtained in Lemma \ref{lemma.g0-min}. It follows that, if $g_0(\hat{c})\geq \cH^{\homega\prime}(0+)$, then $g_1(\bar{c}) < g_0(\hat{c})$ and so $g$ is minimised at $(\bar{c},\zeta(\bar{c}))$. Conversely, if $g_0(\hat{c}) < \cH^{\homega\prime}(0+)$, then we get
	\[
	\lim_{x\downarrow 0}\frac{\partial}{\partial x} g(x,\hat{c}) = \frac{g_0(\hat{c}) - \cH^{\homega\prime}(0+)}{\hat{c}-\beta} < 0,
	\]
	hence $(0,\hat{c})$ is not a minimiser of $g$. We conclude that $(c_1^\ast,c_2^\ast) = (\bar{c},\zeta(\bar{c}))$.\\
	In case (2), first we notice that $g_1(0) \geq \cH^{\homega\prime}(0+)$. Now, for $x \in (0,c_{1\max})$ and using the mean value theorem we have
	\begin{align*}
		g_1(x) &= \frac{\cH^{\homega}(\zeta(x)) - \cH^{\homega}(x)}{\zeta(x) - x -\beta}\\
		&=\frac{\zeta(x) - x}{\zeta(x) - x -\beta}\cH^{\homega\prime}(\xi)\\
		&> \cH^{\homega\prime}(x),
	\end{align*}
	where in the inequality we have used that $\cH^{\homega\prime}$ is decreasing on $(0,c_{1\max})$ and that $\frac{\zeta(x) - x}{\zeta(x) - x -\beta} \geq 1$. It follows that $g_1$ is strictly increasing on $(0,c_{1\max})$. Hence $c_1^{\ast} = 0$, and even further $(c_1^\ast,c_2^\ast) = (0,\hat{c})$.\\
	Finally, in case (3), first note that $c_{1\max} = 0$ and that $\cH^{\homega\prime}$ is strictly increasing on $(0,\infty)$. It now follows that  $c_1^{\ast} = 0$, and even further $(c_1^\ast,c_2^\ast) = (0,\hat{c})$.
\end{proof}
To sum up, using Lemmas \eqref{lemma.unique}--\eqref{lemma.condition} we have obtained that the optimal barriers $(c_1^\ast,c_2^\ast)$ are unique, and we have derived conditions to characterise and compute them in terms of the cost parameter $\beta$ and the generalised scale function. Hence, the proof of Theorem \ref{thm.unique} is complete.
\section{Numerical Experiments}\label{Sec:Numerical}
We provide a numerical illustration of our results. Throughout this section, we assume that our L\'evy process $X$ is given by
\[
X_t = x + \mu t + \sigma B_t - \sum_{i = 1}^{N_t} Y_i,
\]
where $\mu = 0.075, \, \sigma = 0.5, \, B=(B_t)_{t\geq 0}$ is a standard Brownian motion, $(N_t)_{t\geq 0}$ is an homogeneous Poisson process with arrival rate $\lambda = 0.5$ independent of $B$, and $(Y_i)_{i \geq 1}$ is a sequence of iid exponential random variables with parameter $\alpha = 9$; finally, we let $q = 0.025$. In this model, the scale function $W_q$ can be computed explicitly as 
\[
W_q(x) = \sum_{i=1}^3 D_i \mathrm{e}^{\theta_i x},
\]
where $\theta_i, \, i = 1,2,3$ are the distinct roots of $s \mapsto \psi(s) - q$ satisfying $\theta_1 > 0$ and $\theta_2,\, \theta_3 < 0$, and  $D_i $ are given by $D_i = \frac{1}{\psi'(\theta_i)}$. Consequently, the second scale function $Z_q$ can also be found explicitly by standard integration.\\
For this numerical illustration, we consider that the bankruptcy rate function is of the form $\omega(x) = \phi \Ind_{(-\infty,a]} + (\phi + m(x-a))\Ind_{(a,0]}$, with $a=-1$, $\phi = 1.5$, and $m = -0.15$. Note that, thanks to the explicit expression for the scale functions $W_q$ and $Z_q$, we are able to compute numerically the generalised scale function $\cH^{\homega}$ by the iteration scheme described in Section 2.2 in \cite{mata-renaud_2024}.\\
Using these parameter values, first we find that $\beta_{\max}= 0.009$, hence for our numerical experiment we take $\beta = 0.001$. In Figure \ref{Fig:Step:Derivative} we have plotted $\cH^{\homega\prime}$ (Blue and yellow curves), as well as the mapping $g_1$ (Red curve). We observe that $c_1^\ast > 0$ is the unique minimiser of $g_0$, and it is also the unique intersection point of $\cH^{\homega\prime}$ and $g_1$. Afterwards we find $c_2^\ast$ as the one satisfying $\cH^{\homega\prime}(c_1^\ast) = \cH^{\homega\prime}(c_2^\ast)$. Equipped with the optimal barriers $(c_1^\ast,c_2^\ast)$ we are able to compute the value function as well as its first derivative, we plot them in Figure \ref{Fig:Omega:Step}. It is worth noting that the $C^1$ fit condition at $c_2^\ast$ is satisfied, however the value function is not globally $C^2$, which is expected due to the representation from Corollary \ref{cor_definetti}. 
\begin{figure}[h!]
	\centering
	\includegraphics[scale=0.75]{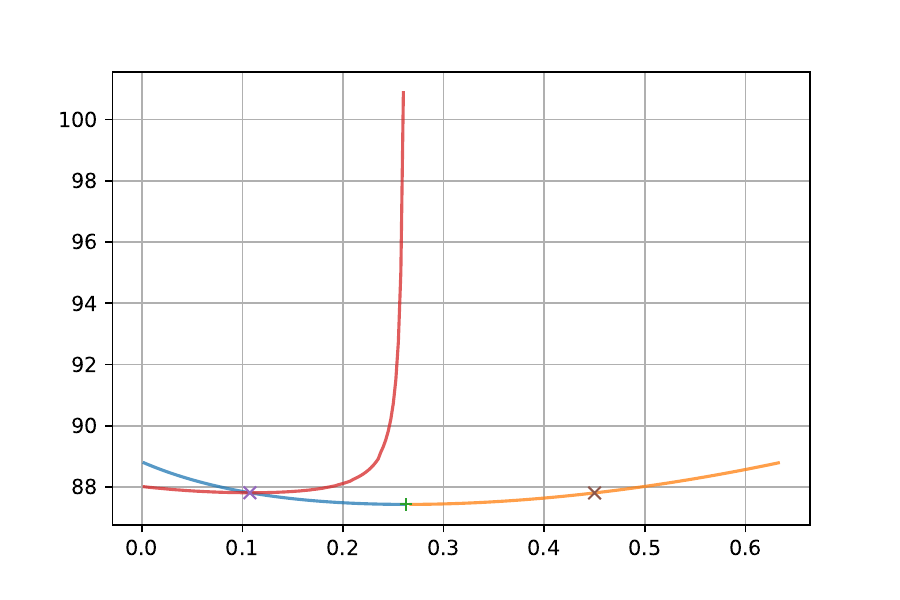}
	\caption{Plot of the function $g_0$ (Red), and $\cH^{\homega\prime}$ (Blue and Yellow). The unique minimiser of $g_1$, denoted by $c_1^\ast$ as well as $\zeta(c_1^\ast)$ are marked as $\times$. The unique minimiser of $\cH^{\homega\prime}$ is marked as a $+$.}
	\label{Fig:Step:Derivative}
\end{figure}
\begin{figure}[h!]
	\centering
	\begin{tabular}{cc}
	\includegraphics[scale=0.5]{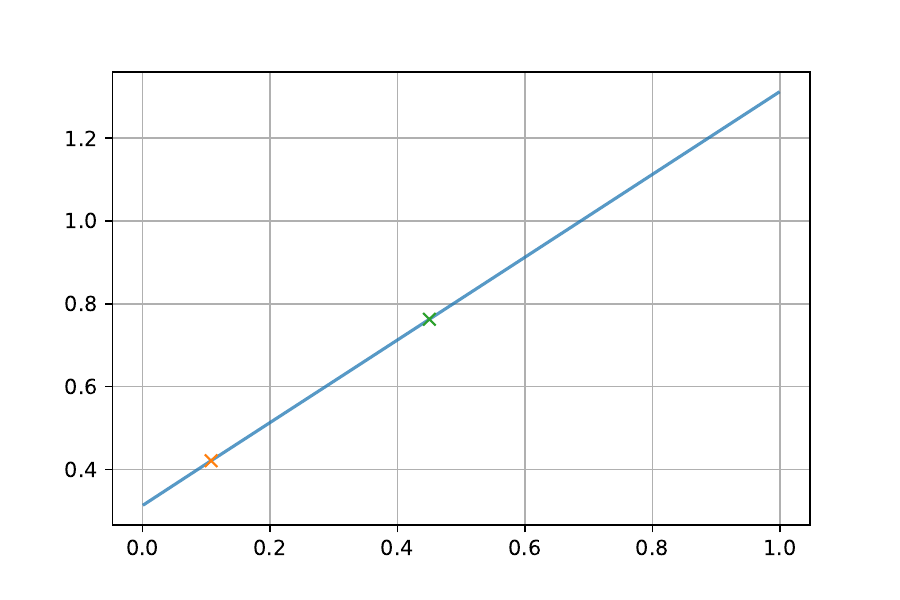} &
	\includegraphics[scale = 0.5]{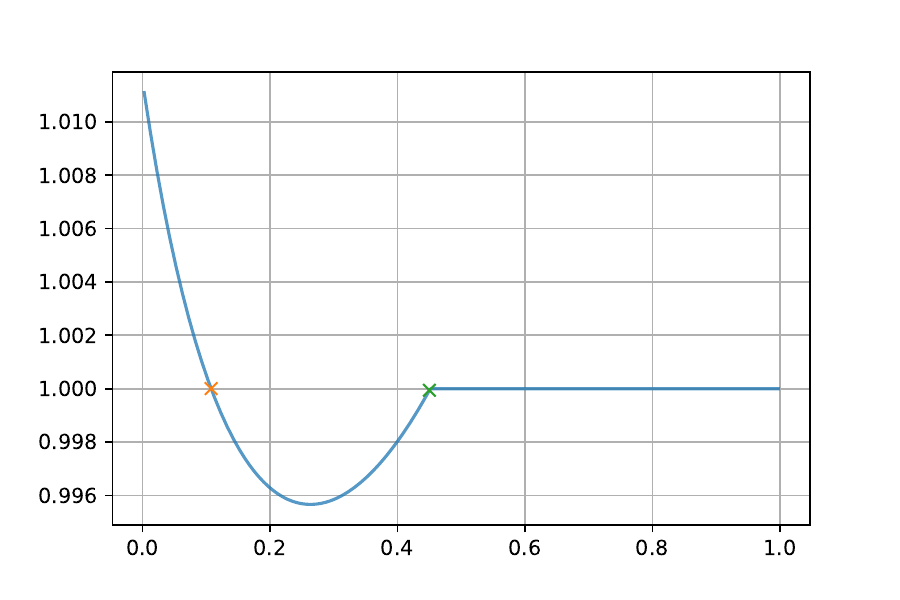}
	\end{tabular}
	\caption{Value function (left) and its first derivative (right) for $\omega$ step functions. The optimal dividend barrier is indicated as a cross. The dashed line corresponds to the value function of Parisian ruin. }
	\label{Fig:Omega:Step}
\end{figure}

\section*{Acknowledgements}

Funding in support of this work was provided by a CRM-ISM Postdoctoral Fellowship from the Centre de recherches mathématiques (CRM) and the Institut des sciences mathématiques (ISM).
 \section*{Competing Interests}
The author has no relevant financial or non-financial interests to disclose.
%
%
\bibliographystyle{abbrv}
\bibliography{references-SNLPs.bib}

\end{document}